\documentclass[11pt,letterpaper]{amsart}
\usepackage{
  amssymb, xcolor, mathtools, mleftright, xurl}
\usepackage[shortlabels]{enumitem}
\usepackage{hyperref}

\usepackage[margin=1in]{geometry}

\newtheorem{theorem}{Theorem}[section]
\newtheorem{lemma}[theorem]{Lemma}

\newtheorem{prop}[theorem]{Proposition}

\newtheorem{conj}[theorem]{Conjecture}

\theoremstyle{definition}

\newtheorem{defn}[theorem]{Definition}

\theoremstyle{remark}
\newtheorem{rem}[theorem]{Remark}

\DeclareMathOperator{\ct}{ct}

\DeclareMathOperator{\disc}{disc}
\DeclareMathOperator{\Ht}{Ht}
\DeclareMathOperator{\Htp}{Htp}
\DeclareMathOperator{\ind}{ind}
\DeclareMathOperator{\sgn}{sgn}
\newcommand{\monic}{\mathrm{monic}}
\newcommand{\prim}{\mathrm{prim}}
\DeclareMathOperator{\Res}{Res}

\DeclareMathOperator{\Disc}{Disc}

\DeclareMathOperator{\Cl}{Cl}

\newcommand{\FF}{\mathbb{F}}
\newcommand{\QQ}{\mathbb{Q}}

\newcommand{\ZZ}{\mathbb{Z}}

\renewcommand{\aa}{\mathfrak{a}}
\newcommand{\bb}{\mathfrak{b}}

\newcommand{\C}{\mathcal{C}}
\newcommand{\D}{\mathcal{D}}
\newcommand{\E}{\mathcal{E}}

\newcommand{\OO}{\mathcal{O}}

\newcommand{\cross}{\times}

\newcommand{\textand}{\quad \text{and} \quad}
\renewcommand{\to}{\mathop{\rightarrow}\limits}
\newcommand{\size}[1]{\lvert #1 \rvert}
\newcommand{\Size}[1]{\left\lvert #1 \right\rvert}
\newcommand{\floor}[1]{\left\lfloor #1 \right\rfloor}
\newcommand{\ceil}[1]{\left\lceil #1 \right\rceil}
\newcommand{\1}{\mathbf{1}}

\newcommand{\intsec}{\cap}

\newcommand{\<}{\left\langle}
\renewcommand{\>}{\right\rangle}

\renewcommand{\epsilon}{\varepsilon}

\mleftright

\title[Galois groups of twisted reciprocal polynomials]{Galois groups of reciprocal polynomials II:\\Twisted reciprocal polynomials}
\author{Theresa C. Anderson and Evan M. O'Dorney}

\begin{document}

\begin{abstract}
We study the Galois group $G_f$ of a random polynomial $f$ of height at most $H$ in the family of polynomials of degree $2n$ satisfying the twisted reciprocal relation $f(x) = x^{2n}/b^n \cdot f(b/x)$, which arise in a wide variety of applications.  Our main result is a theorem of van der Waerden--Bhargava type: the probability that $G_f$ is not the full hyperoctahedral group $S_2 \wr S_n$ is $\Theta(H^{-1}\log H)$, independent of $b$, with the leading-order group $G_1$ being of index $2$. This paper is a companion to a recent paper by the authors and Bertelli addressing reciprocal polynomials (i.e.\ the case $b = 1$).
\end{abstract}

\subjclass{11R32, 11R45, 11C08, 11N35, 20E22}

\maketitle

\section{Introduction}

\subsection{\texorpdfstring{$b$}{b}-Reciprocal polynomials}
Let $n \geq 1$ and $b \neq 0$ be integers. We say that a polynomial $f(x) = \sum_{i=0}^{2n} a_i x^i \in \ZZ[x]$ of degree $2n$ is \emph{$b$-reciprocal} if it satisfies the linear relations
\begin{equation}
  a_{n+i} = b^i a_{n-i}, \quad i = 0, 1,\ldots, n,
\end{equation}
or, in other words, the identity
\[
  f(x) = \frac{x^{2n}}{b^n} f\left(\frac{b}{x}\right).
\]
If $\disc f \neq 0$, then we can consider the Galois group $G_f \subseteq S_{2n}$ of the roots of $f$. Note that if $\alpha \neq 0$ is a root of $f$, so is $b/\alpha$, implying that $G_f$ is a subgroup of the hyperoctahedral group $G_0 = S_2 \wr S_n$. We are interested in the question of how often $G_f \subsetneq G_0$, for fixed $n$ and coefficients bounded by $H$.

When $b = 1$, these $f$ are \emph{reciprocal polynomials}, which naturally arise as characteristic polynomials of symplectic matrices, and their Galois groups were studied in \cite{ABORecipPolys}.  Here we ask the corresponding question for the whole $b$-reciprocal family.  Of particular note is the case when $b=-1$: these $f$ are \emph{skew-reciprocal polynomials}.  Skew-reciprocal polynomials arise as characteristic polynomials of antisymplectic matrices, and have numerous applications \cite{Hua}.  Our result in fact is a uniform answer for all $b$: 
\begin{theorem} \label{thm:main}
  Let $n \geq 1$ and $b \neq 0$ be integers. Let $\E_{n,b}(H)$ be the number of separable $b$-reciprocal polynomials $f$ of degree $2n$ with coefficients in $[-H,H]$ whose Galois group is not $S_2 \wr S_n$. Then, for fixed $n$ and $b$,
  \[
    \E_{n,b}(H) \asymp H^{n} \log H.
  \]
\end{theorem}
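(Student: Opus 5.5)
Write $f(x) = x^n g(x + b/x)$, where $g$ is a monic-up-to-leading-coefficient polynomial of degree $n$. Its coefficients are integer linear combinations of $a_n,\dots,a_{2n}$, related by a unipotent triangular change of variables with entries depending on $b$. The roots of $f$ come in pairs $\{\alpha, b/\alpha\}$ with $\alpha + b/\alpha = \beta$ a root of $g$. Hence $G_f \subseteq S_2 \wr S_n$ is the full group iff:
\begin{enumerate}[(i)]
\item $g$ has Galois group $S_n$, and
\item the quadratic extensions $K_\beta(\sqrt{\beta^2 - 4b})/K_\beta$ are as independent as possible.
\end{enumerate}
By the standard wreath-product criterion (as in \cite{ABORecipPolys}), condition (ii) says the kernel of the map to $S_n$ is all of $\FF_2^n$. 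Given (i), $S_n$ acts on $\FF_2^n$ with invariant submodules $0$, the sum-zero hyperplane $\Sigma$, the diagonal line $\Delta$, and $\FF_2^n$. So a proper kernel is contained either in $\Sigma$ or in a submodule avoiding $\Sigma$.

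\textbf{Kernel in $\Sigma$.} This means the norm $N_{K_\beta/\QQ}(\beta^2-4b)$ is a rational square. This norm is $\Res(g(x), x^2-4b)$ up to a sign and a leading-coefficient factor, i.e.\ $\pm f(2\sqrt b)f(-2\sqrt b)$ up to a power of $b$. So the first index-$2$ group $G_1$ arises exactly when $f(2\sqrt{b})\,f(-2\sqrt{b})\cdot(\pm b^{?})$ is a square. Equivalently, $\disc f$ divided by $\disc(g)^2$ times a power of $b$ is a square.

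\textbf{Lower bound.} The quantity $Q = f(2\sqrt b)f(-2\sqrt b)$ is a quadratic form $A^2 - bB^2$ (times a constant), where $A$ and $B$ are integer linear forms in the $n+1$ free coefficients $a_n,\dots,a_{2n}$. Fix all but two coordinates so that $A$ and $B$ vary over a rank-two lattice. We then count $(A,B)$ in a box of size $H$ with $c(A^2 - bB^2)$ a perfect square. This is a count of points on a conic, and it gives $\gg H\log H$ solutions for a positive proportion of the $H^{n-1}$ choices of the other coordinates. The $\log H$ comes from summing over the parametrizing divisors, exactly as in the $b=1$ case. Finally we must check that most such $f$ still have $g$ with group $S_n$ and nonsquare discriminant issues absent, which follows from Hilbert irreducibility with power savings (Cohen/S.D. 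Cohen large sieve). This yields $\E_{n,b}(H) \gg H^n\log H$.

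\textbf{Upper bound.} We bound each source of failure in turn.
\begin{itemize}
\item \emph{$G_g \ne S_n$.} The count is $O(H^{n-1+\epsilon})$ or better, by known van der Waerden bounds; for the count $\ll H^{n}$ the crude bound $O(H^{n+1-1/2+\epsilon})$ is not enough. We will instead use Bhargava's $O(H^{n-1+\epsilon})$-type results for degree $n$ with $n+1$ coefficients, i.e.\ $O(H^{(n+1)-1})$ up to logs, applied via the linear change of variables. For $b=1$ this is the same input as in \cite{ABORecipPolys}.
\item \emph{Kernel avoiding $\Sigma$, i.e.\ kernel $\subseteq \Delta$ or $0$.} Here each $\beta^2-4b$ becomes a square times a common rational class, which forces $f$ to factor as $f = h(x)\,\tilde h(x)$ with $\tilde h$ the twisted reciprocal of $h$ (up to a twist). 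Counting such factorizations gives $O(H^{n}\cdot H^{-1+\epsilon}\cdot H)$; we must make this $o(H^n\log H)$ via bounds on products of polynomials. This works for $n\ge 2$, while for $n = 1$ the only case is $\Sigma$ anyway.
\item \emph{Kernel in $\Sigma$.} We must show $\#\{(A,B,\text{rest}) : c(A^2-bB^2) = \square\} \ll H^n\log H$. Fibering over the other $n-1$ coordinates, this reduces to the uniform bound $\#\{|A|,|B|\le H: c(A^2-bB^2)=\square\}\ll H\log H$, uniform in the fiber.
\end{itemize}

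\textbf{Main obstacle.} The hard step is this uniform conic count, along with handling degenerate fibers where $A$ and $B$ are dependent, or $b$ is a square so that $A^2-bB^2$ factors. When $b$ is a square, $f(\pm2\sqrt b)$ are two separate integers $u,v$ and we count $uv=\square$. Writing $u=dm^2$, $v=dk^2$ and summing over $d$ gives $\sum_{d\le H} H/d\asymp H\log H$, which handles both bounds. When $b$ is not a square, parametrize solutions by ideals of norm up to $H$ in $\QQ(\sqrt b)$. A divisor-sum over ideals then again gives $H\log H$, with the class group and units affecting only constants. This is why the answer is independent of $b$.
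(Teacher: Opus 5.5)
Your $G_1$ part matches the paper's argument, for both the lower bound and the upper bound on the locus where $g(2\sqrt b)g(-2\sqrt b)$ is a square. There, $g(\pm2\sqrt b)=X\pm2Y\sqrt b$, where $X$ involves only even-index and $Y$ only odd-index coefficients. So there are no degenerate fibers, and each of the $\asymp H\log H$ points on $X^2-4bY^2=Z^2$ has $\Theta(H^{n-1})$ lifts. Two small corrections: the relevant values are $g(\pm2\sqrt b)$, i.e.\ $f(\pm\sqrt b)$ up to a power of $b$, not $f(\pm2\sqrt b)$; and no Hilbert irreducibility is needed for the lower bound, since any separable $f$ with $G_f\subseteq G_1$ already counts.

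The genuine gap is in the upper bound, at the step ``kernel in $\Sigma$ means $N_{K_\beta/\QQ}(\beta^2-4b)$ is a square.'' This is false. The map $\phi(\mathbf v,\sigma)=\sum_i v_i$ is a homomorphism $G_0\to\FF_2$. If $G_f\cap\FF_2^n\subseteq\Sigma$ and $G_f$ surjects onto $S_n$, then $\phi|_{G_f}$ factors through $S_n$, so it is either $0$ or $\sgn$.
\begin{itemize}
\item $\phi|_{G_f}=0$ gives $G_f\subseteq G_1$, where the norm is a square.
\item $\phi|_{G_f}=\sgn$ gives $G_f\subseteq G_2=\{(\mathbf v,\sigma):\sum_i v_i=\sgn\sigma\}$, whose kernel is also exactly $\Sigma$.
\end{itemize}
On $G_2$, $\prod_i(\alpha_i-b/\alpha_i)$ and $\sqrt{\disc g}$ transform by the same character. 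So the condition is that $g(2\sqrt b)g(-2\sqrt b)\disc g$ is a square, while the norm itself is not (since $\disc g$ is not). Already for $n=2$ this is the cyclic group $C_4\subset D_4$. Your proposal never bounds these polynomials.

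This is the hardest part of the theorem, not a side case. You need $\#\{g:\ g(2\sqrt b)g(-2\sqrt b)\disc g=\square\}=o(H^n\log H)$. That is a full power saving over the trivial $H^{n+1}$ for a square condition of degree $2n$ in the coefficients, and generic tools (e.g.\ the large sieve, which saves roughly $H^{1/2}$) do not provide it. The paper proves $\ll H^n$ by rerunning Bhargava's three-case argument with $K_g$ as the field:
\begin{itemize}
\item For $\Disc K_g$ large with radical $C\le H^{1+\delta}$, it bounds the number of $g$ with $D\mid\Disc K_g$ by $O(1)^{\omega(C)}H^{n+1}/D'^2$. The key input is that a prime $p$ with $v_p(D)$ odd forces $b$ to be a square mod $p$ and $u-2\sqrt b$ to divide $g$ mod $p$ for some $\sqrt b\in\FF_p$.
\item Small $\Disc K_g$ is Bhargava's Case II, unchanged.
\item Large radical is handled by the double discriminant $\Res_{c_0}(h,\partial h/\partial c_0)$ with $h=g(2\sqrt b)g(-2\sqrt b)\disc g$.
\end{itemize}

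Two smaller points. First, your $\Delta$-kernel ($G_3$) bound stops at $H^{n+\epsilon}$, which is not $o(H^n\log H)$. The paper gets $O(H^{(n+1)/2}\log^2H)$ by controlling factorizations $f=c\,h(x)\,x^nh(b/x)$ over quadratic fields $\QQ(\sqrt k)$, $k\ll H^2$, via content ideals, $\Cl(K_2)[2]$ and units. Second, for nonmonic $g$ the number with $G_g\neq S_n$ is $\asymp H^n$ (e.g.\ all $g$ with $c_0=0$), not $O(H^{n-1+\epsilon})$. So you need Bhargava's $O(H^n)$ bound with no logarithmic loss.
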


In fact, more can be said: $100\%$ of the contribution to $\E_{n,b}(H)$ as $H \to \infty$ comes from a single Galois group, denoted $G_1$ below; all other Galois groups appear at most $O(H^n)$ times. This can be seen from our bounds and the group theory of \cite[Remark 3.6]{ABORecipPolys}, which shows that all other subgroups of $G_0 = S_2 \wr S_n$ are contained in $G_2$, $G_3$, or $S_2 \wr H$ for some $H \subsetneq S_n$. This situation contrasts favorably with the state of the art for general (nonreciprocal) degree-$n$ polynomials, where some groups such as $A_n$ have only been shown (by Bhargava \cite{Bhargava_vdW}) to appear with frequency $O(H^n)$, potentially tying the dominant group $S_{n-1}$, although $A_n$ is expected to appear only $H^{(n+2)/2 + o(1)}$ times: see \cite{MR4768827} for more details.

Additionally, our result reveals an interesting connection between the sets
\[
\{f:\deg{f} = 2n, f(x) = g(x)h(x), \deg{g} = \deg{h} = n\}
\]
and 
\[
\{f: \deg{f} = 2n, f \text{ is $b$-reciprocal, } G_f \neq S_2\wr S_n\};
\]
indeed van der Waerden showed  \cite{vdW1936} that for coefficients in $[-H,H]$, polynomials in the first set occur with frequency $H^{-n}\log{H}$, which matches our count for the second set.

We get analogous results under a variant setup that restricts $f$ to be monic:  
\begin{theorem} \label{thm:monic}
  Let $n \geq 2$ and $b \neq 0$ be integers. Let $\E_{n,b}^\monic(H)$ be the number of separable monic $b$-reciprocal polynomials $f$ of degree $2n$ with coefficients in $[-H,H]$ whose Galois group is not the full $G_0 = S_2 \wr S_n$. Then:
  \[
    \E_{n,b}^\monic(H) \asymp H^{n - 1} \log H.
  \]
\end{theorem}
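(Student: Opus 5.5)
The plan mirrors the reciprocal case of \cite{ABORecipPolys}, via the trace substitution. Write a $b$-reciprocal $f$ of degree $2n$ as $f(x) = x^n g(x + b/x)$, where $g(y) = \sum_{j=0}^n c_j y^j$ is a polynomial of degree $n$. The map $(a_n,\ldots,a_{2n}) \mapsto (c_0,\ldots,c_n)$ is an invertible linear map over $\QQ$, upper triangular with unit diagonal, and it rescales heights by bounded factors depending only on $n$ and $b$. The roots of $f$ come in pairs $\{\alpha, b/\alpha\}$ lying over the roots $\beta = \alpha + b/\alpha$ of $g$. So $G_f \subseteq S_2 \wr S_n$, the image of $G_f$ in $S_n$ is $G_g$, and the kernel is cut out by the quadratic extensions $\QQ(\beta_i)(\sqrt{\beta_i^2 - 4b})$.

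Following \cite[Remark 3.6]{ABORecipPolys}, every proper subgroup of $G_0$ lies in one of three types. The first is $S_2 \wr H$ with $H \subsetneq S_n$, i.e.\ $G_g \ne S_n$. The second is $G_1$, the index-$2$ subgroup for which $\prod_i(\beta_i^2 - 4b) = \pm\Res(g(y), y^2-4b) = g(2\sqrt b)\,g(-2\sqrt b)$, up to sign, is a square. The third consists of the other index-$2$ subgroups $G_2, G_3$; these correspond to the product of $\disc g$ with the previous resultant being a square, or to a discriminant-type condition. I will pin these down by computing $\disc f$ in terms of $\disc g$ and $g(2\sqrt b)g(-2\sqrt b)$. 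Throughout, $c_n = a_{2n}$ is the leading coefficient.

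For the upper bound I treat the three cases separately.
\begin{enumerate}[(a)]
\item If $G_g \ne S_n$: by Bhargava's van der Waerden theorem applied to $g$ in a box of side $\asymp H$, there are $O(H^n)$ such $g$. Monic-type restrictions do not matter here, since we have $n+1$ free coefficients.
\item For $G_1$: we need $N := g(2\sqrt b)\,g(-2\sqrt b) = \square$. Write $g(2\sqrt b) = P + Q\sqrt b$ with $P, Q$ integer linear forms in the $c_j$; then $N = P^2 - bQ^2$. Counting $(c_j)$ with $P^2 - bQ^2 = m^2$ reduces to counting integer points on the quadric cone $P^2 - bQ^2 = m^2$ in a box of size $H$ in the $(P,Q)$-coordinates, with $n-1$ other free coordinates. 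The number of $(P,Q,m)$ with $|P|,|Q| \ll H$ on such a conic is $\asymp H\log H$ by a divisor-sum argument: parametrize rational points and sum $r(N)$-type functions. This gives $\asymp H^{n}\log H$. When $b$ is a square, $N$ factors over $\ZZ$ as the product of two linear forms, and the count $\sum_{|u|,|v|\ll H}\1_{uv = \square}$ is again $\asymp H\log H$. I need $P$ and $Q$ to be independent linear forms, which holds for $n \ge 1$.
\item For $G_2, G_3$: the relevant condition is that $\disc g \cdot N$, or some similar nondegenerate quantity of higher degree, is a square. I expect to bound this by $O(H^{n})$ using the square-sieve/large-sieve and Bhargava-type counting as in \cite{ABORecipPolys}, or by a uniform count of points with $y^2 = F(c)$ fibered over all but two coordinates.
\end{enumerate}

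For the lower bound I exhibit $\gg H^n\log H$ of the $G_1$-polynomials from (b) whose $g$ nevertheless has $G_g = S_n$ and which avoid the thin sets. Removing $O(H^n)$ exceptions from a set of size $\gg H^n\log H$ suffices, provided the lower count in (b) holds uniformly in the other coordinates. I then check that for these $f$ the group is exactly $G_1$ and not smaller; smaller groups fall in the $O(H^n)$ cases anyway. The monic version follows identically with $c_n = 1$ fixed, giving $H^{n-1}\log H$ for $n \ge 2$; there $P$ and $Q$ still involve free coefficients.

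The main obstacle is (c): the $G_2/G_3$ conditions are square conditions on polynomials of higher degree in the coefficients, and I must get $O(H^n)$ uniformly in $b$. I would first try fibering, i.e.\ fixing all but one coefficient and using bounds for integral points on $y^2 = F(t)$ for a polynomial $F$ in one variable $t$. For $\deg F \ge 3$ (squarefree) these are $O(H^{\epsilon})$ each, falling back on \cite{ABORecipPolys}'s argument where that fails.
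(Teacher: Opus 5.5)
\textbf{Gap: $G_3$ is not an index-$2$ subgroup.} Your plan in (c) has a structural error here. For odd $n\ge 3$, $G_3=\langle\mathbf 1\rangle\times S_n$ has order $2\cdot n!$, so its index in $G_0$ is $2^{n-1}\ge 4$. It is also not contained in any index-$2$ subgroup. The abelianization of $G_0$ is $\FF_2^2$ via $(\mathbf v,\sigma)\mapsto(\sum_i v_i,\sgn\sigma)$, so the only index-$2$ subgroups are $G_1$, $G_2$ and $S_2\wr A_n$. But $G_3$ contains $(0,e)$, $(0,(12))$, $(\mathbf 1,e)$ and $(\mathbf 1,(12))$, which realize all four values of that pair.

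Hence no square condition (on $N$, on $N\disc g$, or on anything else) detects $G_f\subseteq G_3$. Neither a square sieve nor fibering over curves $y^2=F(t)$ can bound this set.

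What is needed is the structural description. $G_f\subseteq G_3$ means that some choice $\{\alpha_1,\dots,\alpha_n\}$ of one root from each pair is stabilized by a subgroup of index $\le 2$. Equivalently, $f=c\,h(x)\,x^nh(b/x)$ with $h$ of degree $n$ defined over $\QQ$ or over a quadratic field $K_2=\QQ(\sqrt k)$; in the latter case $x^nh(b/x)=c'\bar h(x)$. One then counts the $h$ directly:
\begin{itemize}
\item Over $\QQ$, comparing contents forces $\Ht h\ll\sqrt H$.
\item Over $K_2$, you normalize the content ideal $\aa$, which is determined up to $\size{\Cl(K_2)[2]}\asymp 2^{\omega(k)}$ classes. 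In the real case, units pin $c'$ down to $O(1)$ values. You then count points of $\aa$ in a region of area $O(N(\aa)H)$ in the Minkowski embedding, use that half the coefficients determine the rest, and sum over squarefree $k\ll H^2$.
\item In the monic case, $\aa=(1)$ and the leading coefficient is a unit with $O(\log H/\log k)$ choices.
\end{itemize}
This gives bounds such as $H^2$ for $n=3$ and $H^{(n-1)/2}\log H$ for $n\ge7$, all $O(H^{n-1})$. Without an argument of this kind, your upper bound is unproved for every odd $n\ge 3$.

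\textbf{$G_2$: the fibering plan falls short.} Here the condition really is that $N\disc g$ is a square, but your primary plan does not reach the needed bound. Fixing all but one of the $n$ free coefficients of a monic $g$ leaves $H^{n-1}$ fibers. Even granting $O(H^\epsilon)$ integral points per curve $y^2=F(t)$ (not available with this uniformity), you get only $H^{n-1+\epsilon}$, which exceeds the main term $H^{n-1}\log H$, and degenerate fibers still need separate treatment.

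So you must actually run Bhargava's Case I/II/III argument as adapted for the reciprocal case, and that adaptation has real content:
\begin{itemize}
\item If $v_p(\Disc K_g)$ is odd, then $v_p\bigl(g(2\sqrt b)g(-2\sqrt b)\bigr)$ is odd. Since this product is a norm from $\QQ(\sqrt b)$, $b$ must be a square mod $p$, and $g$ must vanish mod $p$ at one of the roots $2\sqrt b\in\FF_p$. The annotated splitting types therefore have to record that choice of root.
\item Case III needs the analogue $\Res_{c_0}(\Phi,\partial\Phi/\partial c_0)$ of the double discriminant, where $\Phi=g(2\sqrt b)g(-2\sqrt b)\disc g$ is viewed as a polynomial in the coefficients.
\end{itemize}

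\textbf{What is fine.} Parts (a) and (b) and your lower bound agree with the paper's $G_1$ count via the conic $X^2-4bY^2=Z^2$. One correction to (a): in the monic case the relevant input is Bhargava's $O(H^{n-1})$ bound for monic degree-$n$ $g$, not the presence of ``$n+1$ free coefficients.''
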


These polynomials are important to study due to their appearance as characteristic polynomials. As already mentioned, any matrix from the symplectic group $\mathrm{Sp}({2n}, \Bbbk)$ has reciprocal characteristic polynomial. Similarly, any matrix from the symplectic similitude group $\mathrm{GSp}(2n, \mathbb{\Bbbk})$ has $b$-reciprocal characteristic polynomial, where $b$ is the factor by which it scales the fixed symplectic form. It is desirable to know how often a $b$-reciprocal polynomial appears as a characteristic polynomial in this group. Over finite fields, this was done by Chavdarov \cite{Chavdarov}; this paper sheds possible light on the corresponding question for symplectic similitudes over $\ZZ$. 

The majority of this paper is devoted to proving Theorem \ref{thm:main}.  We address any alterations to get Theorem \ref{thm:monic} along the way, and close by mentioning some partial progress and suggested areas of exploration for other special families of polynomials.

\subsection{Notation}
Throughout this paper, $n$ and $b$ are fixed, while $H \to \infty$. We carry over the same notational conventions as in \cite{ABORecipPolys}, as far as practical.

\subsection{Acknowledgments}
This research was supported by the NSF.

\section{\texorpdfstring{$b$}{b}-Reciprocal polynomials}
If $f$ is a $b$-reciprocal polynomial of degree $2n$, it is easy to see that there is a unique degree $n$ integer-coefficient polynomial
\[
  g(u) = c_n u^n + \cdots + c_1 u + c_0
\]
such that
\[
  f(x) = x^n g\left(x + \frac{b}{x}\right).
\]
The passage between $f$ and $g$ is bijective and linear, and it increases or decreases heights by at most a bounded factor, depending only on $n$ and $b$. Hence it is immaterial whether we count $f$ or $g$ of height at most $H$. For most purposes, it is more convenient to count $g$.

We denote the roots of $f$ by
\[
  \alpha_1, \frac{b}{\alpha_1}, \alpha_2, \frac{b}{\alpha_2}, \ldots, \alpha_n, \frac{b}{\alpha_n}.
\]
\[
  \beta_i = \alpha_i + \frac{b}{\alpha_i}.
\]
We will sometimes write $\alpha = \alpha_1$ and $\beta = \beta_1$ when the choice of root is irrelevant.

\begin{lemma} \label{lem:disc_f}
  $\disc f = b^{n(n-1)} g\bigl(2\sqrt{b}\bigr) g\bigl(-2\sqrt{b}\bigr) (\disc g)^2$.
\end{lemma}
\begin{proof}
  Express both sides in terms of the $\alpha_i$ as in \cite[Lemma 2.1]{ABORecipPolys}.
\end{proof} 

Assuming $n \geq 2$, we can establish the lower bounds $\E_{n,b}(H) \gg H^n$, $\E_{n,b}^\monic(H) \gg H^{n-1}$ by multiplying a fixed $b$-reciprocal quadratic polynomial (e.g.\ $x^2 - b$) by the $b$-reciprocal polynomials of degree $2n - 2$ and height $\leq c H$ for an appropriate $c$. We can then assume any statement that occurs for all but $O(H^{n})$ of the $O(H^{n + 1})$ polynomials $g$ of height at most $H$, and all of $O(H^{n-1})$ of those that are monic. For example, we may assume that $g$ is irreducible and that $g(\pm 2\sqrt{D})$ are nonzero. We have the tower of number fields
  \[
    K_f = \QQ(\alpha) \quad \supseteq \quad K_g = \QQ(\beta) \quad \supset \quad \QQ,
  \]
  where $K_g$ is an extension of degree $n$, while $K_f/K_g$ is of degree at most $2$, given by $K_f = K_g(\sqrt{\beta^2 - 4 b})$. Let $\widetilde{K}_g$ and $\widetilde{K}_f$, respectively, be the splitting fields of $g$ and $f$, and let $G_g$ and $G_f$ be their respective Galois groups, which are subgroups of $S_n$ and $G_0$, with $G_f \twoheadrightarrow G_g$ under the natural projection $G_0 \twoheadrightarrow S_n$. By the main result of Bhargava \cite[Theorem 1]{Bhargava_vdW}, we can assume that $G_g$ is the whole $S_n$. Our aim in this paper is to understand when $G_f$ is not the whole $G_0$.

  Group-theoretically, this problem behaves just as in \cite{ABORecipPolys}:
  
  \begin{prop}[\cite{ABORecipPolys}, Theorem 3.4] \label{prop:G_i}
    The maximal subgroups of $G_0$ whose projection onto $S_n$ is the whole group are
    \begin{alignat*}{2}
      G_1 &= \left\{(\mathbf{v}, \sigma) \in \FF_2^n \rtimes S_n : \sum_i v_i = 0\right\} = \<\1\>^\perp \rtimes S_n, &\quad n &\geq 1 \\
      G_2 &= \left\{(\mathbf{v}, \sigma) \in \FF_2^n \rtimes S_n : \sum_i v_i = \sgn \sigma\right\}, &\quad n &\geq 2 \\
      G_3 &= \<\1\> \cross S_n, &\quad n &\geq 3 \text{ odd}
    \end{alignat*}
  \end{prop}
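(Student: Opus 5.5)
This is a purely group-theoretic statement about $G_0 = S_2 \wr S_n = \FF_2^n \rtimes S_n$. The plan is to classify a maximal subgroup $M \subsetneq G_0$ with $M \twoheadrightarrow S_n$ by its intersection $V = M \cap \FF_2^n$. Since $M$ surjects onto $S_n$, the subgroup $V$ is an $S_n$-submodule of the permutation module $\FF_2^n$. The first step is to recall that the only $S_n$-submodules of $\FF_2^n$ are
\[
0, \quad \<\1\>, \quad \<\1\>^\perp, \quad \FF_2^n .
\]
This holds for $n \ge 3$; for small $n$ these coincide partially, e.g.\ $\<\1\> \subseteq \<\1\>^\perp$ when $n$ is even. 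The proof of this step: any nonzero vector that is not $0$ or $\1$ has two coordinates that differ. Adding to it its image under the corresponding transposition produces $e_i + e_j$, and the $S_n$-orbit of $e_i+e_j$ spans $\<\1\>^\perp$.

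The second step treats the case $V = \FF_2^n$. Then $M \supseteq \FF_2^n$ and $M$ maps onto $S_n$, so $M = G_0$, which is excluded. Hence $V$ is one of $0$, $\<\1\>$, or $\<\1\>^\perp$.

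The third step handles $V \supseteq \<\1\>^\perp$, which has index $2$ in $\FF_2^n$. Then $M/\<\1\>^\perp$ is a complement-type subgroup of $G_0/\<\1\>^\perp \cong \FF_2 \times S_n$; this is a direct product because $S_n$ acts trivially on $\FF_2^n/\<\1\>^\perp$ via the sum map. The subgroups of $\FF_2 \times S_n$ projecting onto $S_n$ and meeting $\FF_2$ trivially are graphs of homomorphisms $S_n \to \FF_2$, namely the trivial map and $\sgn$ (for $n \ge 2$). These two graphs give exactly $G_1$ and $G_2$. Both have index $2$, so they are maximal.

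The fourth step is the main obstacle: $V \subseteq \<\1\>$. Here $M$ is an extension of $S_n$ by $V$, and $M$ is maximal, so $M \cdot \<\1\>^\perp$ is either $M$ or $G_0$. If it were $M$, we would have $V \supseteq \<\1\>^\perp$, contradicting $V \subseteq \<\1\>$ except in small cases, which are checked by hand. So $M\<\1\>^\perp = G_0$, and $M$ projects onto $G_0/\<\1\>^\perp$.

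I would then use cohomology: complements to $\FF_2^n$ in $G_0$ are classified by $H^1(S_n, \FF_2^n) = H^1(S_{n-1}, \FF_2) = \operatorname{Hom}(S_{n-1},\FF_2)$, by Shapiro's lemma.
\begin{itemize}
\item If $V = 0$, then $M$ is a complement. Tracking the sum map shows that $M$ lies in $G_1$ or $G_2$, so $M$ is not maximal, a contradiction.
\item If $V = \<\1\>$, pass to $\FF_2^n/\<\1\>$. For $n$ odd there is the splitting $\FF_2^n = \<\1\> \oplus \<\1\>^\perp$. The condition $M\<\1\>^\perp = G_0$ forces $M \cap \<\1\>^\perp$ to be trivial modulo $\<\1\>$, so $M/\<\1\>$ is a complement to $\<\1\>^\perp$ in $\<\1\>^\perp \rtimes S_n$, up to conjugacy. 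For $n$ odd, $H^1(S_n,\<\1\>^\perp)$ vanishes after restricting the sign twist, which gives $M = \<\1\> \times S_n = G_3$; I would verify its maximality by the same submodule argument. For $n$ even, $\<\1\> \subseteq \<\1\>^\perp$, and the analogous analysis lands inside $G_1$ or $G_2$.
\end{itemize}

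Since the statement is cited verbatim from \cite[Theorem 3.4]{ABORecipPolys}, the final step is simply to note that the argument never uses $b$: the group $G_0$ and its action are identical to the reciprocal case. The proof therefore transfers unchanged.
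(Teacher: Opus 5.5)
Your proposal is correct, and its final paragraph matches what the paper does: the paper gives no new proof and simply invokes \cite[Theorem 3.4]{ABORecipPolys}, since $G_0 = S_2 \wr S_n$ and its action on $\FF_2^n$ do not involve $b$. Your reconstruction of the group theory is also sound, and the one step you justify only cryptically, $H^1(S_n, \<\1\>^\perp) = 0$ for odd $n \ge 3$, follows from the splitting $\FF_2^n = \<\1\> \oplus \<\1\>^\perp$: Shapiro gives $H^1(S_n, \FF_2^n) \cong \mathrm{Hom}(S_{n-1}, \FF_2) \cong \FF_2$, and the summand $H^1(S_n, \<\1\>) = \mathrm{Hom}(S_n, \FF_2) \cong \FF_2$ already accounts for all of it.
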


  To prove Theorems \ref{thm:main} and \ref{thm:monic}, we must bound the number of polynomials $f$, equivalently $g$, for which $G_f$ is a subgroup of $G_1$, $G_2$, or $G_3$ for the values of $n$ listed in Proposition \ref{prop:G_i}.
  
  \begin{defn} \label{def:E_n}
    For $G \subseteq G_0$ and $H \geq 2$, let $\E_n(G; H)$ be the number of separable reciprocal polynomials $f$ of degree $2n$ with coefficients in $[-H, H]$ such that $G_g = S_n$ and $G_f \subseteq G$. Let $\E_n^{\monic}(G; H)$ count the subset of these that are monic.
  \end{defn}

  Applying the method of proof of Lemma 3.8 of \cite{ABORecipPolys} to Lemma \ref{lem:disc_f}, we get the following criteria:
  \begin{lemma}\label{lem:G123_conds} Assume that $G_g$ is the whole of $S_n$. Then:
    \begin{enumerate}[$($a$)$]
      \item\label{it:G1} $G_f \subseteq G_1$ if and only if $g\bigl(2\sqrt{b}\bigr) g\bigl(-2\sqrt{b}\bigr)$ is a square.
      \item\label{it:G2} $G_f \subseteq G_2$ if and only if $g\bigl(2\sqrt{b}\bigr) g\bigl(-2\sqrt{b}\bigr) \disc g$ is a square.
    \end{enumerate}
  \end{lemma}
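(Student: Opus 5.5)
The plan is to describe $G_1$ and $G_2$ as kernels of characters of $G_0$ and to identify the corresponding quadratic subfields of $\widetilde{K}_f$ as $\QQ(\sqrt{\cdot})$ of explicit square classes, as in \cite[Lemma 3.8]{ABORecipPolys}. Write $G_0 = \FF_2^n \rtimes S_n$, where the $i$th coordinate of $\mathbf{v}$ records whether $\alpha_i$ and $b/\alpha_i$ are swapped. Consider the characters $\chi_1(\mathbf{v},\sigma) = (-1)^{\sum v_i}$ and $\chi_2(\mathbf{v},\sigma) = (-1)^{\sum v_i}\sgn\sigma$; then $G_1 = \ker\chi_1$ and $G_2 = \ker\chi_2$. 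For any character $\chi$ of $G_0$ that is nontrivial on $G_f$, the inclusion $G_f \subseteq \ker\chi$ holds exactly when $\chi|_{G_f}$ is trivial. If $\chi|_{G_f}$ is nontrivial, its kernel cuts out a quadratic field $\QQ(\sqrt{d_\chi})$, and $G_f \subseteq \ker\chi$ exactly when $d_\chi$ is a square in $\QQ$. So we need to find $d_\chi$.

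For $\chi_1$, take
\[
  \delta = \prod_{i=1}^n \left(\alpha_i - \frac{b}{\alpha_i}\right).
\]
Each swap of $\alpha_i$ with $b/\alpha_i$ negates one factor, and permuting the indices fixes $\delta$, so $\sigma\delta = \chi_1(\sigma)\delta$ for every $\sigma \in G_f$. Since $f$ is separable, $\delta \neq 0$. Therefore $G_f \subseteq G_1$ if and only if $\delta \in \QQ$, which, because $\delta^2$ is fixed by $G_f$ and hence rational, is equivalent to $\delta^2$ being a rational square. Now compute
\[
  \left(\alpha_i - \frac{b}{\alpha_i}\right)^2 = \beta_i^2 - 4b = (\beta_i - 2\sqrt b)(\beta_i + 2\sqrt b).
\]
Hence
\[
  \delta^2 = \prod_i (2\sqrt b - \beta_i)(-2\sqrt b - \beta_i) = \frac{g(2\sqrt b)\, g(-2\sqrt b)}{c_n^2}.
\]
Up to the square $c_n^2$, this is $g(2\sqrt b)g(-2\sqrt b)$, which proves (a). Note that $g(2\sqrt b)g(-2\sqrt b)$ is a rational integer: it is a norm from $\QQ(\sqrt b)$, or it is $g(2\sqrt b)^2$ when $b$ is a perfect square.

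For $\chi_2$, use $\delta \cdot \sqrt{\disc g}$, where $\sqrt{\disc g} = c_n^{n-1}\prod_{i<j}(\beta_i - \beta_j)$. The $\beta$'s are fixed by the swaps, and a permutation $\sigma$ acts on $\sqrt{\disc g}$ by $\sgn\sigma$. So $G_f$ acts on this product through $\chi_2$. The product is nonzero because $g$ is separable, and its square is $g(2\sqrt b)g(-2\sqrt b)\disc g$ up to a rational square. The same argument then gives (b).

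The main point needing care is the claim that $\delta^2$ being a square in $\QQ$ is equivalent to $\delta$ being fixed by $G_f$. This is immediate once $\delta \neq 0$ and $\delta \in \widetilde{K}_f$ are established, since $\delta$ is fixed by $G_f$ exactly when $\delta \in \QQ$. The hypothesis $G_g = S_n$ is not essential for either direction; it is needed only to ensure the relevant maximal subgroups are the ones listed in Proposition~\ref{prop:G_i}.
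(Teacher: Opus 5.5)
Your proof is correct and is essentially the paper's argument. The paper notes that $\chi_1$ is the restriction to $G_0$ of the sign character of $S_{2n}$ (so $G_1 = G_0 \cap A_{2n}$), and reads the relevant square classes off $\disc f$ and $\disc f \cdot \disc g$ via Lemma~\ref{lem:disc_f}; your $\delta$ equals $\sqrt{\disc f}$ up to the nonzero rational factor $\pm c_n b^{n(n-1)/2}\disc g$, so you have simply inlined that discriminant computation.

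One side remark of yours is wrong, though it is not used. When $b = m^2$ the product is $g(2m)g(-2m)$, not $g(2\sqrt{b})^2$; otherwise (a) would hold automatically for $b=1$. Integrality holds in every case anyway, since $g(\pm 2\sqrt{b}) = X \pm 2Y\sqrt{b}$ with $X, Y \in \ZZ$.
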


\subsection{Counting \texorpdfstring{$G_1$}{G1}-polynomials}
From $G_1$ we get the main terms of Theorems \ref{thm:main} and \ref{thm:monic}.
\begin{theorem} \label{thm:G1}
  $\E_{n,b}(G_1; H) \asymp H^{n} \log H$, and $\E_{n,b}^\monic(G_1; H) \asymp H^{n - 1} \log H$.
\end{theorem}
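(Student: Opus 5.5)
By Lemma \ref{lem:G123_conds}\ref{it:G1}, the task is to count integer polynomials $g$ of height at most $H$ for which $N(g) := g\bigl(2\sqrt{b}\bigr) g\bigl(-2\sqrt{b}\bigr)$ is a nonzero square. The extra conditions $G_g = S_n$, irreducibility and separability only remove $O(H^n)$ polynomials (resp.\ $O(H^{n-1})$ monic ones), so they do not affect the order of magnitude. First make $N(g)$ explicit. Split $g$ into even and odd parts, $g(u) = P(u^2) + u\,Q(u^2)$. Then $g(\pm 2\sqrt{b}) = P(4b) \pm 2\sqrt{b}\,Q(4b)$, so
\[
  N(g) = A^2 - 4b\,B^2, \qquad A = P(4b) = \sum_{k} c_{2k}(4b)^k, \quad B = Q(4b) = \sum_k c_{2k+1}(4b)^k.
\]
Here $A$ and $B$ are linear forms in disjoint sets of coefficients. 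Each has a coefficient equal to $1$, namely $c_0$ and $c_1$ respectively. The condition is therefore that the integer point $(A,B)$ lies on a conic: $A^2 - 4bB^2 = m^2$, which rearranges to $(A-m)(A+m) = 4bB^2$. This is a representation-by-binary-form problem with fibres that are easy to count.

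\textbf{Upper bound.} Fix all coefficients except $c_0$ and $c_1$; there are $O(H^{n-1})$ such choices. Then $A = c_0 + a'$ and $B = c_1 + b'$ range over shifted intervals of length about $H$, where $a'$ and $b'$ are of size $O(H)$. So it suffices to show that the number of $(A,B)$ with $|A|, |B| \ll H$ and $A^2 - 4bB^2$ a square is $O(H \log H)$. To do this, parametrize the solutions of $A^2 - 4bB^2 = m^2$. Writing $d = \gcd(A, B)$, primitive solutions of the ternary quadratic form $x^2 - 4by^2 - z^2 = 0$ are given by finitely many binary quadratic parametrizations $(x,y,z) = (Q_1(s,t), Q_2(s,t), Q_3(s,t))$. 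The number of primitive solutions with $|x|, |y| \le X$ is therefore $O(X)$, since $|s|, |t| \ll \sqrt{X}$. Summing over $d$ gives
\[
  \sum_{d \le H} O(H/d) = O(H \log H).
\]
If $b$ is a square, the form $A^2 - 4bB^2$ factors, and the same bound follows from the factorization $(A - 2\sqrt{b}B)(A + 2\sqrt{b}B) = \square$ together with a divisor-type count. In the monic case the same argument applies. For $n \ge 2$, $c_n = 1$ is fixed while $c_0$ and $c_1$ remain free, so the number of outer choices is $O(H^{n-2})$, giving $O(H^{n-1}\log H)$.

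\textbf{Lower bound.} Construct $\gg H\log H$ pairs $(A,B)$ in a box of size $\asymp H$ on the cone; in fact the count above is sharp. Take $A = d(s^2 + 4bt^2)$ and $B = 2dst$ with $s, t$ coprime and $d \le H/(s^2+t^2)$, using $(s^2+4bt^2)^2 - 4b(2st)^2 = (s^2-4bt^2)^2$. The number of such points is
\[
  \sum_{s,t} \frac{H}{s^2+t^2} \asymp H \log H.
\]
Then, for a positive proportion of the $\asymp H^{n-1}$ choices of the remaining coefficients, choose $c_0$ and $c_1$ to hit each such $(A,B)$. This requires restricting $(A,B)$ to a box of size $cH$ with $c$ small, so that $c_0$ and $c_1$ stay in $[-H, H]$. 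This yields $\gg H^n \log H$ polynomials $g$. Finally, discard the $O(H^n)$ exceptions, namely reducible $g$, $G_g \ne S_n$, and $N(g) = 0$, using Bhargava's theorem. The distinct pairs $(A,B)$ give distinct $g$, so no overcounting occurs. The monic case is handled in the same way.

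\textbf{Main obstacle.} The delicate step is the upper bound $O(X)$ for primitive points on the cone uniformly in a box. The parametrization must handle all $b$, including $b$ a square (where the conic is split) and $b<0$ (definite case), and account for finitely many parametrizing families and bounded common factors. I would handle this by a direct divisor argument: write $(A-m)(A+m) = 4bB^2$, and set $A - m = \delta u^2$ and $A + m = \delta' v^2$ up to bounded factors dividing $4b$. This reduces the count to pairs $(u,v)$ with $u^2, v^2 \ll H/\gcd$, which is again a count of $O(H\log H)$.
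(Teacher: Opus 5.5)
Your proposal is correct and follows essentially the same route as the paper: you split $g(\pm 2\sqrt{b})$ into even- and odd-coefficient linear forms, reduce to integer points on the conic $X^2 - 4bY^2 = Z^2$, parametrize primitive points by $(s^2+4bt^2,\, 2st)$ with a bounded common factor, sum $\sum_{k \ll H} H/k \asymp H\log H$, and multiply by the $\Theta(H^{n-1})$ (resp.\ $\Theta(H^{n-2})$ in the monic case) choices of the remaining coefficients. Your explicit use of $c_0$ and $c_1$ as the free unit-coefficient variables in $A$ and $B$ is a concrete version of the paper's remark that one free coefficient in each form suffices, which is why $n \geq 2$ is needed in the monic case.
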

\begin{proof}
For $g(u) = \sum_{i=0}^n c_i u^i$, write
\[
  g\bigl(\pm 2\sqrt{b}\bigr) = X \pm 2Y\sqrt{b},
\]
where the integer and radical parts
\[
  X = \sum_{i = 0}^{\floor{n/2}} c_{2i} (4b)^{i} \textand Y = \sum_{i = 0}^{\floor{(n-1)/2}} c_{2i + 1} (4b)^i
\]
depend only on the even and odd degree coefficients of $g$ respectively. Then $G_f \subseteq G_1$ if and only if $X$ and $Y$ satisfy the equation
\begin{equation} \label{eq:conic}
  \C : X^2 - 4b Y^2 = Z^2,
\end{equation}
which is a conic in the projective plane. By stereographic projection from the solution $(1,0,1)$, we parametrize all solutions up to scaling by
\begin{equation} \label{eq:param_conic}
  [X : Y : Z] = [s^2 + 4b t^2 : 2 s t : s^2 - 4b t^2],
\end{equation}
where $s$ and $t$ are coprime integers, with each pair $[s : t]$, up to negation, yielding a unique point on the conic $\C$. The coordinates $X$, $Y$, $Z$ as computed by \eqref{eq:param_conic} will be coprime after canceling out the common factor
\begin{align*}
  g &= \gcd(s^2 + 4b t^2, 2 s t, s^2 - 4b t^2) \\
  &\leq \gcd(2s^2, 8 b t^2) \\
  &\leq 8 b \gcd(s^2, t^2) \\
  &= 8b \\
  &\ll 1.
\end{align*}
Now a general solution
\[
  (X,Y,Z) = \left(\frac{k(s^2 + 4bt^2)}{g}, \frac{2k st}{g}, \frac{k(s^2 - 4bt^2)}{g}\right)
\]
has height
\begin{align*}
  \Ht(X,Y) &= \max\{|X|,|Y|\} \\
  &= \frac{k}{g} \max\{s^2 + 4b t^2, 2 s t\} \\
  &\asymp k \cdot \Ht(s,t)^2.
\end{align*}
Hence the number $N$ of solutions $(X,Y)$ of height at most $H$ is
\begin{align*}
  N &\asymp \sum_{k \ll H} \sum_{\substack{|s|,|t| \ll \sqrt{H/k} \\ \gcd(s,t) = 1}} 1 \\
  &\asymp \sum_{k \ll H} \frac{H}{k} \\
  &\asymp H \log H.
\end{align*}
Each pair $(X,Y)$ corresponds to $O(H^{n-1})$ polynomials $g$, establishing the desired upper bound $\E_{n,b}(G_1, H) \ll H^n \log H$. The lower bound is established similarly, noting that if $\Ht(X,Y) \leq cH$ for an appropriate $c$, there are in fact $\Theta(H^{n-1})$ polynomials $g$ for each $(X,Y)$.

The monic case proceeds identically. In either case, it is only necessary to have at least one free coefficient $c_i$ in both $X$ and $Y$, entailing $n \geq 1$ in general and $n \geq 2$ in the monic case. (In the monic case where $n = 1$, it is easy to see that only finitely many polynomials $f(x) = x^2 + a_1 x + b$ split into linear factors, which is the only way for $G_f$ to fall short of the full $G_0 = S_2$.)
\end{proof}

\subsection{Counting \texorpdfstring{$G_2$}{G2}-polynomials}\label{sec:G2}
In \cite{ABORecipPolys}, the most difficult group to deal with was $G_2$. We find here that the method works without essential change, so we briefly explain the adaptations needed to prove the following:
\begin{theorem} \label{thm:G2}
  For $n \geq 2$,
  \begin{align}
    \E_n(G_2; H) &\ll H^{n}  \label{eq:G2_nonmonic} \\
    \E_n^\monic(G_2; H) &\ll H^{n - 1}. \label{eq:G2_monic}
  \end{align}
\end{theorem}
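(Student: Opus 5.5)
By Lemma~\ref{lem:G123_conds}\ref{it:G2}, we must count $g$ of height at most $H$ (with $G_g = S_n$) for which
\[
  g\bigl(2\sqrt{b}\bigr)\, g\bigl(-2\sqrt{b}\bigr) \disc g = (X^2 - 4bY^2)\disc g
\]
is a square. Here $X, Y$ are the linear forms in the even and odd coefficients of $g$ from the proof of Theorem~\ref{thm:G1}. I would follow the strategy of \cite{ABORecipPolys} for $G_2$, replacing the factor $g(2)g(-2) = (X+2Y)(X-2Y)$ used there by the norm form $N(X,Y) = X^2 - 4bY^2$. The equation becomes $N(X,Y)\disc g = \square$.

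\textbf{Step 1: factor the condition.} Write $N(X,Y) = d m^2$ with $d$ squarefree. The condition then says that $d \cdot \disc g$ is a square. Equivalently, the quadratic resolvent field $\QQ(\sqrt{\disc g})$ equals $\QQ(\sqrt{d})$.

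\textbf{Step 2: dispose of the easy ranges.} Fix the coefficients of $g$ other than one free coefficient $c_i$ appearing in $X$ or $Y$; when $n\ge 2$ both parities are available. Then $N$ is a quadratic polynomial in $c_i$. As $g$ varies over $O(H^{n+1})$ polynomials, the pairs $(X,Y)$ range over $O(H^2)$ values, each hit $\asymp H^{n-1}$ times. For $(X,Y)$ with $|d|$ large, I would use a large-sieve or square-sieve bound for "$d\,\disc g$ is a square" along the fibres in the remaining coefficients. Heuristically the probability is $\ll |d|^{-1/2}$ up to logs, since $\disc g$ is a square times $d$ for a thin set. Summing over $(X,Y)$ with the distribution of squarefree parts of the binary quadratic form $N$ gives $\ll H^{n}$.

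\textbf{Step 3: small $d$.} When $|d|$ is small, $N(X,Y) = d m^2$ cuts out a conic for each $d$. As in Theorem~\ref{thm:G1}, the number of $(X,Y)$ of height $\le H$ on each such conic is $\ll H\log H$ in total, and most of these come from $m$ near $H$. For each of them we still need $\disc g \in d\cdot\square$, which costs a further power saving in the fibre over $(X,Y)$. By Bhargava's bound \cite[Theorem 1]{Bhargava_vdW} and its fibred versions used in \cite{ABORecipPolys}, the number of $g$ with $\disc g$ in a fixed square class is $\ll H^{n-1}\cdot H^{-1/2+\epsilon}$ on such fibres. This makes the total $o(H^n)$ and in particular kills the $\log H$.

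\textbf{Main obstacle.} The key difficulty, as in \cite{ABORecipPolys}, is a uniform fibrewise bound: for fixed $(X,Y)$, or fixed $d$, we need to count $g$ with $d\,\disc g = \square$ uniformly in $d$. I would first try the argument of \cite{ABORecipPolys} verbatim. There one parametrizes $g$ by one linear coefficient and applies the uniform bound for integral points on $y^2 = d\,\disc g(c)$, a curve of degree $2n-2$ in $c$, or the Bhargava-style geometry-of-numbers count. I expect the only changes are to replace $\ZZ$-linear factors by the norm form from $\ZZ[\sqrt b]$ and to track the extra constants depending on $b$, which are absorbed into the implied constants. The monic case \eqref{eq:G2_monic} proceeds identically with one fewer free coefficient. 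This requires $n \ge 2$ so that both $X$ and $Y$ still contain a free coefficient.
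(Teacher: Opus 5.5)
There is a genuine gap, and it sits exactly where the content of the theorem lies. The bound $\E_n(G_2;H)\ll H^n$ saves a full power of $H$ over the trivial $H^{n+1}$, comparable to Bhargava's bound for $A_n$, and none of the tools you name supply such a saving.

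Step~2 rests on the unproved heuristic that, on a fibre with $(X,Y)$ fixed, $d\,\disc g$ is a square with probability $\ll |d|^{-1/2}$. The large sieve and square sieve save only about $H^{1/2}$ in this setting; this is Gallagher's $H^{n-1/2}\log H$ in the classical problem. Uniform bounds for integral points on $y^2=d\,\disc g(c)$, in one free coefficient $c$, give roughly $H^{1/2}$ points per fibre, not the $O(1)$ on average you would need. The ``fibred versions'' of Bhargava's theorem invoked in Step~3 are not available off the shelf. Also, the $G_2$ argument of \cite{ABORecipPolys} is not a point count on such curves.

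Moreover, even granting your heuristic, the split does not close. Consider pairs $(X,Y)=k(X_0,Y_0)$ with $X_0^2-4bY_0^2$ squarefree of size $\asymp (H/k)^2$. These have $|d|^{-1/2}\asymp k/H$. So the bound Step~2 would produce is $\gg H^{n-1}\sum_{k\le H/\sqrt{D_0}}(H/k)^2(k/H)\asymp H^n\log(H/\sqrt{D_0})$, even after you move the range $|d|\le D_0$ to Step~3.

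The missing idea is to organize the count by $D=|\Disc K_g|$ and its radical $C$, following Bhargava's Cases I--III. The key observation is that the $G_2$ condition gives an extra factor $p^{-1}$, beyond the ramification density $p^{-v_p(D)}$, at each prime dividing $D$ to odd order. Indeed, if $v_p(D)$ is odd, then $v_p\bigl(g(2\sqrt b)g(-2\sqrt b)\bigr)$ is odd. This forces $b$ to be a square mod $p$ and $g \bmod p$ to vanish at some $2\sqrt b\in\FF_p$. Combining this with $v_p(D)\le\ind(\tilde g\bmod p)$, via annotated splitting types $(\sigma,j)$ with $j\in\{0,\pm 2\sqrt b\}$, gives Lemma~\ref{lem:case1}. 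That lemma bounds by $O(1)^{\omega(C)}H^{n+1}/D'^2$ the number of polynomials with $D\mid\Disc K_g$, which settles Case I ($D\ge H^{2+2\delta}$, $C\le H^{1+\delta}$).

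Case II (small $D$) is Bhargava's argument unchanged, since it never uses the reciprocal structure. Case III ($C>H^{1+\delta}$) uses the double discriminant $R=\Res_{c_0}(h,\partial h/\partial c_0)$ of $h=g(2\sqrt b)g(-2\sqrt b)\disc g$: a large $p\mid C$ gives $p^2\mid h$ for mod-$p$ reasons, hence $p\mid R$.

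Your decomposition by the squarefree part $d$ of $X^2-4bY^2$ records the condition $p\mid g(2\sqrt b)g(-2\sqrt b)$. It never uses that every $p\mid d$ must also ramify in $K_g$, which holds because $v_p(\disc g)\equiv v_p(\Disc K_g)\pmod 2$. It is the combination of these two conditions at each such prime that produces the full power saving.
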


By Lemma \ref{lem:G123_conds}\ref{it:G2}, we wish to count $g$ such that $g\bigl(2\sqrt{b}\bigr) g\bigl(-2\sqrt{b}\bigr) \disc g$ is a square. Let $\delta$ be a small constant, such as $1/(4n)$. We first prove the following analogue of Lemma 5.7 of \cite{ABORecipPolys}:
\begin{lemma} \label{lem:case1}
  Let $D$ be a positive integer. Let $C = \prod_{p\mid D} p$ be its radical; let $D'^2 = \prod_{p\mid D} p^{2\ceil{v_p(D)/2}}$ be its smallest square multiple. Assume that $C < H^{1 + \delta}$. The number of $b$-reciprocal integer polynomials $f$ of height $\leq H$ for which $G_f \subseteq G_2$ and $D \mid \Disc K_g$ is
  \[
  \ll \frac{O(1)^{\omega(C)} H^{n+1}}{D'^2}.
  \]
\end{lemma}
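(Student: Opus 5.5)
We reduce to local conditions at each prime $p \mid D$ and multiply the local densities, following Lemma 5.7 of \cite{ABORecipPolys}. By Lemma \ref{lem:G123_conds}\ref{it:G2}, $G_f \subseteq G_2$ means that $g\bigl(2\sqrt{b}\bigr) g\bigl(-2\sqrt{b}\bigr) \disc g$ is a square. Write $\disc g = \ind(g)^2 \Disc K_g$, and write $N(g) := g\bigl(2\sqrt{b}\bigr) g\bigl(-2\sqrt{b}\bigr) = X^2 - 4bY^2$ in the notation of Theorem \ref{thm:G1}. The condition then says that $N(g)\Disc K_g$ is a square.

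For each prime $p$ with $p^e \parallel D$ ($e \geq 1$) we get two local conditions. First, $p^e \mid \Disc K_g$. Second, since $N(g)\Disc K_g$ is a square, $v_p(N(g)) \equiv v_p(\Disc K_g) \pmod 2$, so $p^{2\ceil{e/2}-e}$ must additionally divide $N(g)$ when $v_p(\Disc K_g)=e$ is odd. More precisely, we split according to the exact valuation $v_p(\Disc K_g)=e' \geq e$. The polynomials $g$ satisfying the combined condition at $p$ are a union of residue classes modulo $p^{k}$, for some $k$ depending only on $e'$ and $n$.

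The key local estimate is that the density of such classes is $\ll p^{-2\ceil{e/2}}$, uniformly up to a constant factor. We obtain it by combining two inputs:
\begin{itemize}
\item the standard bound (used in Bhargava \cite{Bhargava_vdW} and in \cite{ABORecipPolys}) that the density of $g$ with $p^{e} \mid \Disc K_g$ is $\ll p^{-e}$;
\item an extra factor of $p^{-1}$ for the parity condition when $e$ is odd.
\end{itemize}
The parity input is where the argument really lives. Ramification of $K_g$ at $p$ is governed by $g$ modulo $p$ having a repeated factor. The condition $p \mid N(g)$ requires $g$ to vanish at $\pm 2\sqrt{b}$ modulo a prime above $p$, which is an independent linear condition on $(X,Y)$. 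We therefore expect the two conditions to be approximately independent, so their joint density is $\ll p^{-e-1}$.

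Some primes need separate care. When $p \mid 2b$, or when the repeated root of $g \bmod p$ is itself $\pm 2\sqrt b$, the two conditions may correlate. For these primes we accept a weaker bound that still gives $p^{-2\ceil{e/2}}$ up to an $O(1)$ factor. Only finitely many primes divide $2b$, and in the coincident-root case we plan to check directly that $g(u)$ having a double root at $\pm 2\sqrt{b}$ mod $p$ forces $p^2 \mid N(g)$ with density $\ll p^{-2}$. This case analysis is the main obstacle; we would mimic the corresponding argument of \cite{ABORecipPolys}, which is essentially the case $b = 1$ and uses $\pm 2$ in place of $\pm 2\sqrt b$.

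Finally we globalize by CRT. The combined conditions define a set of residue classes modulo some $M$ with $M \mid C^{k}$. Each prime $p$ contributes $O(1)$ choices of exact exponent $e'$ weighted by a geometric series, which yields the factor $O(1)^{\omega(C)}$. To count lattice points in the box of side $H$, we would rather not require $M \le H$, since $M$ could exceed $H$. Instead we use the fact that the conditions at each $p$ depend only on $g$ modulo $p$ (and on $N(g)$ modulo $p^2$) after fibering. Concretely, we fix the coefficients other than one or two chosen coefficients and count the remaining ones in progressions modulo $C$ or $C^2$. The hypothesis $C < H^{1+\delta}$ keeps the error terms of size $O(H^{n})\cdot O(1)^{\omega(C)}$, which are absorbed. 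Summing over classes gives
\[
\ll \frac{O(1)^{\omega(C)} H^{n+1}}{D'^2}.
\]
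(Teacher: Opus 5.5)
The \emph{local} picture is right: odd $v_p(\Disc K_g)$ forces odd $v_p(N(g))$, hence a root of $g$ at $\pm 2\sqrt b$ modulo $p$, costing one extra factor of $p$. That is exactly the paper's new observation. The genuine gap is the \emph{globalization}.

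The lemma asserts a bound with no additive error term. In Case I, where $D = \size{\Disc K_g} \ge H^{2+2\delta}$ and so $D'^2 \ge H^{2+2\delta}$, the target $O(1)^{\omega(C)}H^{n+1}/D'^2$ is at most $O(1)^{\omega(C)}H^{n-1-2\delta}$. An error of size $O(H^n)O(1)^{\omega(C)}$ is therefore not absorbed, let alone after summing over $D$.

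Nor does your fibering produce the main term. In a fiber with $r \in \{1,2\}$ free coefficients, the admissible set modulo $p$ is either empty or has density $\ge p^{-r}$. The true density $p^{-2\ceil{e/2}}$ for large $e$ comes from most fibers being empty, i.e.\ from congruence conditions modulo $p$ on the \emph{fixed} coefficients, which your count ignores. Imposing them means working modulo $C$ in all $n+1$ coordinates. Since $C$ may be as large as $H^{1+\delta} > H$, a box of side $H$ is not equidistributed among residue classes, so naive CRT counting fails.

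Handling exactly this is the content of Bhargava's Case I argument as adapted in \cite[\textsection 5.3]{ABORecipPolys}, which the paper reuses. All local conditions stay modulo $p$, so the modulus is the squarefree $C$. The only change is the annotated weight $w_{p,\sigma,j}(h) = w'_{p,\sigma}(x^n h(y/x - j))$ with $j \in \{0, \pm 2\sqrt b\} \subset \FF_p$, which shifts the index by $0$ or $1$. Your conditions on $N(g)$ modulo $p^2$, and hence modulo $C^2$, are unnecessary: odd parity only needs $p \mid N(g)$.

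The local step is also only heuristic (``we expect \ldots approximately independent''). Your plan for the coincident case is false: a double root of $g$ at $2\sqrt b$ modulo $p$ gives only $p \mid g(2\sqrt b)$, not $p^2 \mid N(g)$. For example, with $b=1$ and $p$ odd, $g(u) = u^{n-2}\bigl((u-2)^2+p\bigr)$ has $v_p\bigl(g(2)g(-2)\bigr) = 1$.

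No correlation problem actually arises if you argue with splitting types modulo $p$, as the paper does:
\begin{enumerate}[(i)]
\item Discard the primes $p \le n$ as well as $p \mid 2b$; the index bound needs tameness. This changes $D$ by a bounded factor. Also set aside the degenerate case $\tilde g \equiv 0 \bmod p$.
\item Then $p^e \mid \Disc K_g$ forces $\ind(\tilde g \bmod p) \ge e$.
\item Odd $v_p(\Disc K_g)$ forces $b$ to be a square modulo $p$, since for $p$ inert in $\QQ(\sqrt b)$ the norm $N(g)$ has even valuation. It also forces $\tilde g \bmod p$ to have a linear factor at $u = 2\sqrt b$ for some choice of root.
\item A form of splitting type $\sigma$ is determined up to scalar by its distinct irreducible factors. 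So those with one linear factor pinned number $\ll p^{n-\ind\sigma}$ out of $p^{n+1}$, whatever the pinned factor's multiplicity. Together with $\ind\sigma \ge e$, this gives the local factor $p^{-2\ceil{e/2}}$.
\end{enumerate}

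Finally, only such mod-$p$ consequences are usable. The exact valuation $v_p(\Disc K_g)$ is not determined by $g$ modulo any fixed power of $p$, because the index can be highly divisible by $p$. So the local conditions are not unions of classes modulo $p^{k(e',n)}$ as you claim.
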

\begin{proof}
First we divide out from $D$ all primes $p \leq n$ and all primes $p \mid 2b$. If there is at least one $K_g$ with $D \mid \disc K_g$, this change only affects $D$ by a bounded factor, since $v_p(\disc K_g)$ is uniformly bounded.

For each prime $p \mid C$, excluding the degenerate case that $g \equiv 0$ mod $p$ (which is dealt with in the same way as in \cite{ABORecipPolys}), let $\sigma_p = (f_1^{e_1} \cdots f_r^{e_r})$ be the splitting type of the homogenization $\tilde g = y^n g(x/y)$. By \cite[Lemma 5.3, first inequality]{ABORecipPolys},
\[
  v_p(D) \leq \ind(\tilde g \bmod p).
\]
Also, if $v_p(D)$ is odd, the relations
\[
  1 \equiv v_p(D) \equiv v_p(\disc g) \equiv v_p\Big(g\bigl(2\sqrt{b}\bigr) g\bigl(-2\sqrt{b}\bigr)\Big) \mod 2
\]
imply, firstly, that $b$ is a square modulo $p$, for if $p$ is inert in the quadratic field $K_2 = \QQ\big(\sqrt{b}\big)$, then the norm
\[
  g\bigl(2\sqrt{b}\bigr) g\bigl(-2\sqrt{b}\bigr) = N_{K_2/\QQ} \bigl(2\sqrt{b}\bigr)
\]
must have even $p$-adic valuation; and secondly that $g(u)$ mod $p$ is divisible by $u - 2 \sqrt{b}$ for some choice of square root $\sqrt{b} \in \FF_p$. Then in defining the analogue of an annotated splitting type $(\sigma, j)$, the annotation $j$ should not be merely an integer $j \in \{0,2,-2\}$, but either $0$ or a choice of square root $j = 2 \sqrt{b} \in \FF_p$. In the latter case we set
\[
  w_{p,\sigma,j}(h) = w'_{p,\sigma}\big(x^n h(y/x - j)\big)
\]
and continue the argument of \cite{ABORecipPolys}. When $|j|/2$ appears as an index it must be replaced by
\[
  \begin{cases}
    0, & j = 0 \\
    1, & j = \pm 2\sqrt{b}.
  \end{cases}
\]
The proof then proceeds exactly as in \cite[\textsection 5.3]{ABORecipPolys}.
\end{proof}
As explained in the discussion following the statement of Lemma 5.7 in \cite{ABORecipPolys}, this lemma immediately establishes Theorem \ref{thm:G2} in Case I (following Bhargava's numbering of the cases), where $D \coloneqq \size{\Disc K_g} \geq H^{2 + 2\delta}$ and $C \coloneqq \prod_{p \mid D} p \leq H^{1 + \delta}$. The proof of Case II is unchanged from \cite[\textsection 5.4]{ABORecipPolys}, which is in turn unchanged from \cite[\textsection 5]{Bhargava_vdW}, since the (twisted) reciprocality of $f$ is not needed. Finally, for Case III, we construct a suitable analogue of the double discriminant by setting
\[
  h(c_0,\ldots, c_n) = g\bigl(2\sqrt{b}\bigr) g\bigl(-2\sqrt{b}\bigr) \disc g
\]
and
\begin{align*}
  R(c_0,\ldots, c_n) &= \Res_{c_0} \left(h, \frac{\partial}{\partial c_0} h\right) \\
  &= (-1)^{n(n-1)/2} c_n \disc_{c_0} h \\
  &= (-1)^{n(n-1)/2} c_n \disc_{c_0} \disc_u g \cdot \Bigl(g\bigl(2\sqrt{b}\bigr) - g\bigl(-2\sqrt{b}\bigr)\Bigr)^2 \cross \\
  &\quad {}\cross \Bigl(\disc_u \bigl(g - g\bigl(2\sqrt{b}\bigr)\bigr)\Bigr)^2 \Bigl(\disc_u \bigl(g - g\bigl(-2\sqrt{b}\bigr)\bigr)\Bigr)^2 
\end{align*}
(compare \cite[(13)]{ABORecipPolys}). Just as in \cite{ABORecipPolys}, we obtain that if $p \mid C$, then $p^2 \mid h$ for mod $p$ reasons, which implies $p \mid R$, and the rest of the proof proceeds as in \cite{ABORecipPolys}.

\subsection{Counting \texorpdfstring{$G_3$}{G3}-polynomials}\label{sec:G3}

Finally, we establish the following analogue of Theorem 6.1 of \cite{ABORecipPolys}.

\begin{theorem} \label{thm:G3}
  For $n \geq 3$ odd,
  \begin{align}
    \E_{n,b}(G_3; H) &\ll \begin{cases}
      H^2 \log^2 H & n = 3 \\
      H^{\frac{n+1}{2}} & n \geq 5
    \end{cases}  \label{eq:G3_nonmonic} \\
    \E_{n,b}^\monic(G_3; H) &\ll \begin{cases}
      H^2 & n = 3 \\
      H^2 \log H \log \log H & n = 5 \\
      H^{\frac{n-1}{2}} \log H & n \geq 7.
    \end{cases}
    \label{eq:G3_monic}
  \end{align}
\end{theorem}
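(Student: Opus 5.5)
The plan is to follow the proof of Theorem 6.1 of \cite{ABORecipPolys}, first turning the condition $G_f \subseteq G_3$ into an explicit factorization of $f$ over a quadratic field. Since $G_3 = \<\1\> \cross S_n$, the vector part $\mathbf v$ of every Galois element is either $0$ or $\1$. Hence the homomorphism $G_f \to \FF_2$ recording which case occurs cuts out a quadratic field $L = \QQ(\sqrt c)$, with $c$ squarefree. The subgroup fixing $L$ stabilizes the partition of the roots into $\{\alpha_1,\ldots,\alpha_n\}$ and $\{b/\alpha_1,\ldots,b/\alpha_n\}$. So over $L$ we get a factorization
\[
  f(x) = a\, h(x)\, h^\tau(x), \qquad h \in \OO_L[x], \ \deg h = n,
\]
where $\tau$ is the nontrivial automorphism of $L$. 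The twisted reciprocity of $f$ then forces $h^\tau(x) = \lambda\, x^n h(b/x)$ for some $\lambda \in L^\times$. Equivalently, $\beta^2 - 4b = c\gamma^2$ with $\gamma \in K_g$; this is the form in which $c$ is read off from $g$.

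Next I would normalize. After scaling $h$, the relation $h^\tau(x) = \lambda x^n h(b/x)$ with $\lambda \tau(\lambda)$ fixed becomes a Hilbert 90–type constraint. This lets us write $\lambda = \mu/\tau(\mu)$ and replace $h$ by $\mu h$, which reduces to finitely many normalized cases for $\lambda$ up to bounded factors depending on $b$. In a normalized case, the coefficients $h_i = u_i + v_i\sqrt c$ satisfy $h_{n-i}^\tau = b^{n-i}\lambda h_i$, up to the bounded factor. These are $n+1$ $\QQ$-linear conditions on the $2(n+1)$ real parameters, which leaves $n+1$ free integer parameters, say the coefficients $h_0,\ldots,h_{(n-1)/2}$ ($n$ odd). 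Because $a\,h h^\tau$ has height $\le H$, a geometry-of-numbers / archimedean argument gives that the free parameters have size $\ll \sqrt{H/|a|}$ in a suitable embedding. The content here is controlling the "imaginary" parts when $c<0$ and the unit ambiguity when $c>0$; for the latter, fix $h$ up to units of $\OO_L$ by a fundamental-domain choice, as in \cite{ABORecipPolys}.

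Then I would count. For fixed $L$ and $a$, the number of $h$ is $\ll (H/|a|)^{(n+1)/2}\cdot(\text{factors depending on } c)$. Summing over $a$ converges when $(n+1)/2 > 1$. The remaining task is to sum over $c$: the number of lattice points depends on the covolume, roughly $|c|^{-(n+1)/4}$ per unit scale, and $|c|$ is bounded by a power of $H$ since $c$ divides a quantity like $\disc f$. For $n \ge 5$ the exponent makes the sum over $c$ converge, giving $H^{(n+1)/2}$. For $n=3$ the exponents balance, yielding the $H^2\log^2 H$ (one log from $a$, one from $c$). The monic case is the same except that $a=\pm1$ is fixed and the leading coefficient of $h$ is a unit, removing one parameter. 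This gives $H^{(n-1)/2}$ times a log from the $c$-sum. In small cases ($n=3,5$), one must instead use the trivial bound from the $G_1$-type conic count or Theorem \ref{thm:G1}-style arguments, which gives $H^2$ and $H^2\log H\log\log H$ respectively. For those I would import the corresponding divisor-sum estimates verbatim from \cite{ABORecipPolys}.

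The main obstacle I expect is the uniformity in $c$. The lattice of admissible $h$ has shape depending on $c$ and $b$, and I need an upper bound on its points in the height box that is uniform in $c$, with the right power of $|c|$ so that the $c$-sum converges (or loses only logs). I would handle this by reducing to counting integer points $(u_i,v_i)$ with $|u_i|\ll\sqrt{H}$ and $|v_i|\ll\sqrt{H/|c|}$. The bound $|v_i|\ll\sqrt{H/|c|}$ follows from the height bound on the real and imaginary parts when $c<0$; when $c>0$ it needs the unit-normalization step, which is where I expect the most care. I would check at this step that the $b$-twist only introduces bounded factors.
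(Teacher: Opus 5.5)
You have the right architecture: factor $f$ over $K_2=\QQ(\sqrt{k})$, relate $h^\tau$ to the twisted reversal of $h$, count $(n+1)/2$ free coefficients, and sum over $k$. But there is a genuine gap in each case, and your normalization step hides the main one.

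\textbf{Non-monic case: the content of $h$.} Take contents in $h^\tau=\lambda x^n h(b/x)$ and in $f=a\,h h^\tau$. This shows that the content ideal $\aa$ of $h$ has $\aa^2\bb$ principal for some $\bb\mid b^n$. So the class of $\aa$ is an invariant of $f$. It lies in one of $O(1)$ cosets of $\Cl(K_2)[2]$, which has size $\asymp 2^{\omega(k)}$ by genus theory, and in general $\aa$ is not principal.
\begin{itemize}
\item No rescaling by $\mu$ can make $h$ integral with unit content. Also, Hilbert 90 only moves $\lambda$ within the elements of norm $b^{-n}$; it does not give $\lambda=\mu/\tau(\mu)$.
\item Your box count $|u_i|\ll\sqrt H$, $|v_i|\ll\sqrt{H/|c|}$, and your treatment of $a$ as an integer with a convergent sum, are valid only when $\aa=(1)$. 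For primitive $f$ one has $a=\pm N(\aa)^{-1}$, and the coefficients of $h$ have archimedean size about $\sqrt{N(\aa)H}$. So the $h$ with non-principal content are simply missing from your count.
\item The paper fixes the class of $\aa$, and fixes $c'=\lambda^{-1}$ up to $O(1)$ choices by unit scaling. It then counts points of the lattice $\aa$, not $\OO_{K_2}$, in a region of area $O(N(\aa)H)$.
\item That region contains three noncollinear lattice points, because $h$ is not a multiple of a rational polynomial. This gives $O(H/\sqrt{|k|})$ choices per free coefficient, uniformly in $k$, and forces $k\ll H^2$.
\item The resulting sum $\sum_{k\ll H^2}2^{\omega(k)}k^{-(n+1)/4}$ is what produces $\log^2 H$ at $n=3$. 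Your explanation ``one log from $a$, one from $c$'' contradicts your own correct remark that the $a$-sum converges when $(n+1)/2>1$.
\end{itemize}

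\textbf{Monic case: the unit count.} Here $\aa=(1)$, and once $c'$ is fixed, the leading coefficient of $h$ is a unit, as you note. But you treat it as giving $O(1)$ choices. For real $K_2$ the fundamental unit satisfies $\varepsilon_k\gg\sqrt k$, so there are $\asymp\log H/\log\varepsilon_k\ll\log H/\log k$ units in the admissible range. The stated bounds come exactly from
$\sum_{2\le k\ll H^2}(H/\sqrt k)^{(n-1)/2}\cdot\log H/\log k$, which gives:
\begin{itemize}
\item $H^2$ for $n=3$;
\item $H^2\log H\log\log H$ for $n=5$, via $\sum 1/(k\log k)$;
\item $H^{(n-1)/2}\log H$ for $n\ge 7$, where the $\log H$ comes from the units and not from the $k$-sum, which converges.
\end{itemize}
Your fallback for $n=3,5$ via ``the $G_1$-type conic count'' cannot work. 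For odd $n$ we have $\1\notin\<\1\>^\perp$, so $G_3\not\subseteq G_1$. In any case Theorem~\ref{thm:G1} gives $H^{n-1}\log H$ in the monic case, which is far larger than the claimed bounds.

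\textbf{Rational case.} You also need to handle separately the case where the factorization $f=c\,h(x)\,x^nh(b/x)$ is defined over $\QQ$, i.e.\ the character $G_f\to\FF_2$ is trivial and there is no $\tau$-relation. It is easy ($\Ht h\ll\sqrt H$ gives $H^{(n+1)/2}$), but your setup with a nontrivial $\tau$ does not cover it.
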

\begin{proof}
Recall that the \emph{content} of a polynomial $f$, $\ct f$, is the $\gcd$ of its coefficients.  First, it suffices to count $f$ that are primitive (i.e.\ of content $1$), since if we can establish $\E_{n,b}^{\prim}(G_3; H) \ll H^{\frac{n+1}{2}}$ for $n \geq 5$, we get
\[
  \E_{n,b}(G_3; H) = \sum_{k \geq 1} \E_{n,b}^\prim\Big(G_3; \frac{H}{k}\Big) \ll H^{\frac{n+1}{2}} \sum_{k \geq 1} \frac{1}{k^{\frac{n+1}{2}}} \ll H^{\frac{n+1}{2}},
\]
and likewise for all the other cases.

The Galois condition $G_f \subseteq G_3$ occurs when there is a factorization
\begin{equation} \label{eq:f_fax}
  f(x) = c \cdot h(x) \cdot x^n h\left(\frac{b}{x}\right)
\end{equation}
defined either over $\QQ$ or over a quadratic field $K_2 = \QQ\bigl(\sqrt{k}\bigr)$. In the former case, we may scale $h$ such that $\ct h = 1$. Then $|c^{-1}| = \ct(x^n h(b/x))$ is a divisor of $b^n$ and in particular is bounded. So $\Ht h = \Htp h \ll \sqrt{H}$, and we get at most $H^{(n+1)/2}$ polynomials $f$, as in the first part of \cite[\textsection 6.2]{ABORecipPolys}.

We are left with the case that \eqref{eq:f_fax} holds with two degree $n$ factors defined over $K_2 = \QQ\bigl(\sqrt{k}\bigr)$ for some squarefree integer $k$. Let $\aa$ be the content ideal of $h$ (that is, the $\OO_{K_2}$-module generated by the coefficients of $h$). By scaling, we may assume that $\aa$ is integral. We have
\[
  \ct\biggl( x^n h\Bigl(\frac{b}{x}\Bigr) \biggr) = \aa \bb,
\]
where $\bb$ is a divisor of $b^n$. Taking contents of both sides of \eqref{eq:f_fax}, we get
\[
  (1) = (\ct f) = c \cdot \aa^2 \bb.
\]
There are $O(1)$ possibilities for $\bb$, and for each $\bb$, the class of $\aa$ is unique up to multiplying by one of the $\size{\Cl(K_2)[2]} \asymp 2^{\omega(k)}$ ideal classes of order dividing $2$. Let us fix $\aa$ and $\bb$. 

Since $f \in \ZZ[x]$, there must be a constant $c' \in K_2^\cross$ such that
\begin{equation} \label{eq:h_conj}
  x^n h\left(\frac{b}{x}\right) = c' \bar{h}(x).
\end{equation}
Comparing contents of \eqref{eq:h_conj}, we get
\[
  (c') = \frac{\aa \bb}{\bar \aa},
\]
so in particular $N(c') = N(\aa)N(\bb)/N(\bar \aa) = N(\bb)$ is a divisor of $b^n$. If $K_2$ is imaginary, we get $\size{c'}_\infty \asymp 1$. If $K_2$ has two real places $v, \bar v$, 
note that the scaling $h \mapsto \eta h$ causes $c' \mapsto \pm \eta^2 c'$. By scaling by an appropriate unit $\eta$, we can assume that $1 \leq \size{c'}_v < \size{\eta}_v^2$. Then there are $O(1)$ possible values of $c'$ given $\bb$ and $\aa$.

We have
\begin{align}
  \sqrt{H} &\geq \sqrt{\Ht f} \nonumber \\
  &= \sqrt{\Htp f} \nonumber \\
  &\asymp \Htp h \nonumber \\
  &= \prod_{v} \max\bigl\{\size{\theta_0}_v, \ldots, \size{\theta_n}_v\bigr\}^{[(K_2)_v : \QQ_v] / [K_2 : \QQ]} \nonumber \\
  &= \prod_{v \nmid \infty} N(\aa\OO_v)^{[(K_2)_v:\QQ_v]/2} \prod_{v\mid \infty} \max\bigl\{\size{\theta_0}_v, \ldots, \size{\theta_n}_v\bigr\}^{[(K_2)_v : \QQ_v] / 2} \nonumber \\
  &= N(\aa)^{-1/2} \prod_{v\mid \infty} \max\bigl\{\size{\theta_0}_v, \ldots, \size{\theta_n}_v\bigr\}^{[(K_2)_v : \QQ_v] / 2}. \label{eq:ht_bd}
\end{align}

In the case that $K_2$ is complex, this constrains the $\theta_i$ to lie in a disk $\D$ of area $O\big(N(\aa) H\big)$. Since $\aa$ has covolume $\Theta\big(\size{k} N(\aa)\big)$, the number of lattice points of $\aa$ in $\D$ is $O(H/\sqrt{\size{k}})$, using that there are at least three noncollinear points in $\D \intsec \aa$ since $h$ is not a scalar multiple of an integer polynomial. Also, by \eqref{eq:h_conj}, the first $(n+1)/2$ coefficients $\theta_0,\ldots, \theta_{(n+1)/2}$ determine the others, so we get $O(H^{(n+1)/2} \size{k}^{-(n+1)/4})$ polynomials $h$ for given $\bb$ and $\aa$.

In the case that $K_2$ is real, the two factors in \eqref{eq:ht_bd} are linked by \eqref{eq:h_conj}, which tells us that
\[
  \max_i \size{\theta_i}_v \asymp \size{c'}_v \max_i \size{\theta_i}_{\bar v} .
\]
Hence
\[
  \size{\theta_i}_{v} \ll \sqrt{\size{c'}_v N(\aa) H} \textand \size{\theta_i}_{\bar v} \ll \sqrt{\frac{N(\aa) H}{\size{c'}_v}}.
\]
So the $\theta_i$ (in the Minkowski embedding) are constrained to lie in a rectangle $\D$ of area $O\big(N(\aa) H\big)$. As in the preceding case, this yields $O(H^{(n+1)/2} k^{-(n+1)/4})$ polynomials $h$ for given $\bb$ and $\aa$. In particular, we must have $k \ll H^2$ to get any $h$ at all. Lastly, we sum over $\bb$, $\aa$ and $k$ to get
\begin{align}
  \E_{n,b}(G_3; H) &\ll H^{(n+1)/2}\sum_{\substack{k \ll H^2\\\text{squarefree}}} \frac{2^{\omega(k)}}{k^{(n+1)/4}} \nonumber \\
  &\leq H^{(n+1)/2} \sum_{\substack{a,b \ll H^2}} \frac{1}{a^{(n+1)/4} b^{(n+1)/4}} \nonumber \\
  &\ll \begin{cases}
    H^{(n+1)/2} \log^2 H & n = 3 \\
    H^{(n+1)/2} & n \geq 5.
  \end{cases}
\end{align}
The monic case is similar and simpler, as we can take $\aa = (1)$, so $(c') = \bb$ and $\theta_n$ is a unit. There are $O(\log H/\log \size{k})$ units within the appropriate height bounds, yielding
\begin{equation*}
  \E_n^{\monic}(G_3; H) \ll \sum_{2 \leq k \ll H^2} \left(\frac{H}{\sqrt{k}}\right)^{\frac{n-1}{2}} \cdot \frac{\log H}{\log k}
\end{equation*}
which gives the claimed numerics as in \cite{ABORecipPolys}.
\end{proof}

Note that the implicit constant depends on $b$ polynomially, as can be seen by examining our proof technique.  More carefully tracking this constant may be of interest, but we do not pursue this here.

\section{Other special polynomial families} \label{sec:fixed_con}
There are many other special polynomial families that we could consider.  That is, restricting to polynomials satisfying certain conditions, what is the Galois behavior of a randomly selected polynomial?  While surveying statistics of other interesting polynomial families is beyond the scope of this paper, we briefly outline partial progress for one such family in order to encourage future study along these lines.

The family we will highlight here are those with fixed constant term, which are also studied in \cite{Chavdarov}.  Consider
\begin{equation} \label{eq:fixed_const_term}
  f(x) = x^n + a_{n-1} x^{n-1} + \cdots + a_1 x + b
\end{equation}
where the outer coefficients $a_n = 1$, $a_0 = b$ are fixed. There are $O(H^{n-1})$ polynomials of this shape of height up to $H$, so the analogue of van der Waerden's conjecture in this setting is the following:
\begin{conj} \label{conj:fixed_con}
  For fixed $n \geq 2$ and $b \neq 0$, the number $E_{n,b}(H)$ of polynomials of the shape \eqref{eq:fixed_const_term} whose Galois group is not the full $S_n$ is bounded by
  \[
    E_{n,b}(H) \asymp H^{n-2}.
  \]
\end{conj}
The lower bound is evident by counting $f$ such that $f(1) = 0$. We mention some progress toward the upper bound, Proposition \ref{vdW:fixed_con} below, in hopes of encouraging further study.

\begin{rem}
This is a natural codimension $2$ sublattice of the family of all polynomials. Note that if we were to fix the constant term but \emph{not} the leading term, we could just as well fix only the leading term $a_n = b$, and then a transformation $f(x) \mapsto b^{n-1} f(x/b)$ sends the family of such $f$ into a finite-index sublattice of all monic polynomials. By the same transformation, a general pair of specifications $a_n = b, a_0 = c$ is reduced to the present case of monic with fixed constant term.
\end{rem}

To study the Galois group, we follow the general approach of Bhargava.  We start with reducible polynomials.
\subsection{Reducible polynomials}
We begin with the reducible case, that is, the case in which the Galois group is intransitive.
\begin{lemma}
  Let $R_{n,b}(H)$ count monic polynomials of constant term $b$ of height at most $H$. We have
  \[
    R_{n,b}(H) \asymp H^{n-2}.
  \]
\end{lemma}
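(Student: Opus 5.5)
Here $R_{n,b}(H)$ is understood to count the \emph{reducible} monic polynomials of the shape \eqref{eq:fixed_const_term} of height at most $H$; this is what the surrounding text means. The lower bound is the one noted after Conjecture \ref{conj:fixed_con}. The condition $f(1)=0$ is the single linear equation $1 + a_{n-1} + \cdots + a_1 + b = 0$ in the $n-1$ free coefficients $a_1,\ldots,a_{n-1}$. It therefore cuts out a codimension-one affine sublattice, which contains $\gg H^{n-2}$ points in the box $[-H,H]^{n-1}$ once $n \geq 3$. The case $n=2$ is degenerate: both sides are $O(1)$, and only finitely many $x^2+a_1x+b$ split.

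For the upper bound, I would split $f = gh$ with $g,h$ monic in $\ZZ[x]$ and $1 \leq k = \deg g \leq n/2$. The constant terms then satisfy $g(0)h(0) = b$, so there are only $O(1)$ choices for the pair $(g(0),h(0))$.

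\emph{Linear factors} ($k=1$). Here $g = x - r$ with $r \mid b$, giving $O(1)$ choices of $r$. Each choice imposes one linear condition $f(r)=0$ on the free coefficients. That condition is nontrivial: the coefficient of $a_1$ in $f(r)$ is $r \neq 0$. Hence each $r$ contributes $O(H^{n-2})$ polynomials.

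\emph{Factors of degree $k \geq 2$.} Both $g$ and $h$ are monic with fixed constant terms. By Gelfond/Mahler-type height inequalities, $\Ht g\cdot\Ht h \ll \Ht f \leq H$. Set $\Ht g \asymp A$ and $\Ht h \asymp B$ with $AB \ll H$, using dyadic ranges. The number of admissible $g$ is $O(A^{k-1})$, since $g$ has $k-1$ free coefficients; likewise $h$ has $O(B^{n-k-1})$ choices. So each dyadic box contributes $\ll A^{k-1}B^{n-k-1}$. Given $AB \ll H$ with $A,B \gg 1$, this is maximized by taking $A \asymp 1$ and $B \asymp H$ when $n-k-1 \geq k-1$, which holds for $k \le n/2$. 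The bound is then $H^{n-k-1} \leq H^{n-3}$.

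The remaining point is the dyadic sum. In the range $A \leq B$, the exponent of $B$ is at least that of $A$, so the geometric sum over dyadic pairs is dominated by its endpoint. The exception is equal exponents, $n = 2k$, where it costs at most a factor $\log H$. Even then the total is $O(H^{n-3}\log H) = O(H^{n-2})$.

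The step I expect to need the most care is the height inequality for factors when the constant term is fixed. I would invoke the standard bound $\Ht g \,\Ht h \leq 2^{n}\sqrt{n+1}\,\Ht f$, which follows from the multiplicativity of Mahler measure. This gives the product bound uniformly and needs no reciprocity input, so I expect the argument to go through as sketched.
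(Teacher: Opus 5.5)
Your proof is correct and follows essentially the same route as the paper. You factor $f = gh$ into monic integer factors whose constant terms divide $b$, treat linear factors separately, and for $\deg g = k \geq 2$ use $\Ht g \cdot \Ht h \ll \Ht f$ to get $\ll H^{n-k-1}$ (times $\log H$ when $k = n/2$), which is $O(H^{n-2})$. The differences are cosmetic: the hyperplane $f(1)=0$ for the lower bound instead of the explicit family $(x+1)\cdot(\cdots)$, the linear condition $f(r)=0$ for $k=1$, and dyadic height ranges instead of a sum over exact heights $m$. Since $k \geq 2$ leaves a full power of $H$ to spare, your loose treatment of the range $A > B$ in the dyadic sum is harmless.
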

\begin{proof}
 The lower bound may be proved by considering the family of polynomials
  \[
    f(x) = (x + 1)(x^{n-1} + d_{n-2} x^{n-2} + \cdots + d_{1}x + b)
  \]
  where $\size{d_i} \leq H/2$. For the upper bound, factor any reducible $f$ as
  \begin{align*}
    f(x) &= g(x)h(x), \\
    g(x) &= x^k + c_{k-1}x^{k-1} + \cdots + c_1 x + c_0, \\
    h(x) &= x^{n-k} + d_{n-k-1}x^{n-k-1} + \cdots + d_1 x + d_0,
  \end{align*}
  with coefficients $c_i, d_i \in \ZZ$ and $k \leq n/2$. The constant terms $c_0$ and $d_0$ have a finite number of possibilities: the divisors of $b$, and the remaining coefficients can be controlled via the height bound
  \[
    \Ht(g) \Ht(h) \asymp \Ht(f) \ll H
  \]
  (see \cite[equation (19)]{ABORecipPolys}). If $k = 1$, then there are only finitely many linear polynomials $g$, and counting the $h$ with $\Ht h \ll H$ yields at most $O(H^{n-2})$ reducible polynomials. If $k > 1$, the number of $f$ factoring into degrees $k$ and $n-k$ is
  \begin{align*}
    R_{n,b,k}(H) &\leq \sum_{m \ll H} \size{\{g : \Ht g = m\}} \cdot \Size{\left\{h : \Ht h \ll \frac{H}{m}\right\}} \\
    &\ll \sum_{m \ll H} m^{k-2} \left(\frac{H}{m}\right)^{n-k-1} \\
    &= H^{n-k-1} \sum_{m\ll H} m^{n - 2k - 1} \\
    &\ll \begin{cases}
        H^{n-k-1}, & k < n/2 \\
        H^{n-k-1} \log H, & k = n/2.
    \end{cases}
  \end{align*}
  In all cases $R_{n,b,k}(H) \ll H^{n-2}$, so $R_{n,b}(H) \ll H^{n-2}$.
\end{proof}

\subsection{Imprimitive polynomials}
Next, the Galois group $G_f$ may be transitive but imprimitive, meaning that the associated number field $K_f = \QQ[x]/f(x)$ contains a subfield, $K_f \supsetneq L \supsetneq \QQ$. The maximal imprimitive subgroups of $S_n$ are the wreath products $S_a \wr S_b$, where $a b = n$; in particular, $n$ must be composite. An excellent bound for the number of polynomials generating an imprimitive extension is given by Widmer \cite[Theorem 1.1]{Widmer11}. It implies \cite[equation (5)]{Bhargava_vdW} that the number of imprimitive polynomials of height at most $H$ is at most $O(H^{n/2 + 2})$. If $n \geq 8$, then $n/2 + 2 \leq n - 2$, so Widmer's bound is adequate for our purposes. As imprimitive extensions must have composite order, we are left with the cases $n = 4$ and $n = 6$.  For these two remaining cases, the results of Chow and Dietmann on quartic and sextic polynomials via resolvents \cite{CD2020}, \cite{CD23_Towards} are not applicable out of the box, but a deeper dive into these results and techniques is likely to resolve these cases.

Finally, we can reduce to studying transitive, primitive polynomials.
\subsection{Primitive polynomials}
Bhargava divides the remaining polynomials into three cases (Cases I, II, and III in his notation).  For Cases I and III we can mimic his proof exactly.  For Case II, we can use Schmidt's bound (and improvements) when $\frac{n+2}{2}+1 \leq n-2$, i.e.\ $n \geq 8$. We can also use Bhargava's exact asymptotics in place of Schmidt's bound for $n = 5$ with an $\varepsilon$ loss). As with the imprimitive case, only small degrees $n$ (here $n \leq 7$) remain unverified, and we expect these degrees will be approachable via resolvents in the style of the Chow--Dietmann papers mentioned above.

Hence we have a van der Waerden result for $n$ large enough:
\begin{prop} \label{vdW:fixed_con}
  For fixed $n \geq 8$ and $b \neq 0$, the number $E_{n,b}(H)$ of polynomials of the shape \eqref{eq:fixed_const_term} whose Galois group is not the full $S_n$ is bounded by
  \[
    E_{n,b}(H) \asymp H^{n-2}.
  \]
\end{prop}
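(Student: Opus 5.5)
The lower bound $E_{n,b}(H) \gg H^{n-2}$ is the one already noted after Conjecture \ref{conj:fixed_con}: every $f$ of shape \eqref{eq:fixed_const_term} with $f(1) = 0$ is reducible. The condition $f(1)=0$ is a single nondegenerate linear condition $1 + a_{n-1} + \cdots + a_1 + b = 0$ on the $n-1$ free coefficients. It therefore cuts out $\gg H^{n-2}$ lattice points in the box $[-H,H]^{n-1}$, and all but $O(H^{n-3})$ of these are separable. For the upper bound I will split the non-$S_n$ polynomials according to Bhargava's trichotomy for Galois groups. If $G_f$ is intransitive, $f$ is reducible. If $G_f$ is transitive but imprimitive, it lies in some $S_a \wr S_{n/a}$. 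Otherwise $G_f$ is primitive and proper, and for such groups Bhargava's Cases I, II, III apply. It then suffices to show that each class contributes $O(H^{n-2})$.

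The reducible class is handled by the preceding Lemma: there are $R_{n,b}(H) \ll H^{n-2}$ reducible polynomials in the family. For the imprimitive class, I will simply forget the constraint that the constant term equals $b$ and bound the count by the number of all imprimitive monic degree-$n$ polynomials of height at most $H$. By Widmer's theorem, in the form of \cite[equation (5)]{Bhargava_vdW}, this is $O(H^{n/2+2})$. Since $n \ge 8$, we have $n/2 + 2 \le n-2$, so this class contributes $O(H^{n-2})$.

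The primitive class is the main obstacle, because there we cannot simply drop the fixed-constant-term condition. Doing so would give counts of order $H^{n-1}$ rather than $H^{n-2}$. So we must rerun Bhargava's argument inside the codimension-one slice $a_0 = b$ of the monic family, which has $H^{n-1}$ points. The key is to check that each of Bhargava's bounds loses exactly one factor of $H$ when the constant term is frozen.

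In Case I (large discriminant with small radical), Bhargava's argument is a local one. It counts polynomials whose reduction mod $p$ has high index, which is a congruence condition on the coefficients. Since $a_0 = b$ is fixed, I will impose these congruence conditions only on $a_1, \ldots, a_{n-1}$. When some prime $p \mid b$ forces a condition involving $a_0$, that condition either holds automatically or excludes $f$ outright, and since $b$ is fixed this costs only a bounded factor. The result is the analogue of Lemma \ref{lem:case1} with $H^{n-1}$ in place of $H^{n+1}$. Case II (small discriminant) counts number fields rather than polynomials, using Schmidt's bound on the number of fields of degree $n$ and bounded discriminant. The fields counted have discriminant at most $H^{2+2\delta}$, and by Schmidt there are $\ll H^{(2+2\delta)(n+2)/4}$ of them. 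Each field has $O(1)$-many generators of a given shape up to height $H$, which gives $H^{\frac{n+2}{2}+1+O(\delta)}$ polynomials. This is at most $H^{n-2}$ once $n \ge 8$ and $\delta$ is small. Case III uses the resultant polynomial $R = \Res_{a_1}(\disc f, \partial_{a_1} \disc f)$; I will take the resultant in $a_1$ rather than $a_0$, since $a_0$ is now fixed. I must check that $R$ does not vanish identically on the slice $a_0 = b$, which I will verify on an explicit trinomial. Bhargava's counting of $p \mid C \Rightarrow p \mid R$ then goes through with $n-1$ free variables, giving $O(H^{n-2+\epsilon})$. To remove the $\epsilon$, I will choose $\delta$ as Bhargava does. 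Summing the three classes gives $E_{n,b}(H) \ll H^{n-2}$.
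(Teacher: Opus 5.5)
Your plan follows the paper's proof. The lower bound comes from $f(1)=0$, and the upper bound splits as follows:
\begin{itemize}
\item reducible $f$ are handled by the preceding Lemma;
\item imprimitive $f$ by Widmer's $O(H^{n/2+2})$, which is where $n\ge 8$ first enters;
\item primitive $f$ by Bhargava's Cases I--III rerun on the slice $a_0=b$, with Schmidt's bound in Case II, where $n\ge 8$ enters again via $\frac{n+2}{2}+1\le n-2$.
\end{itemize}
Your Case III details go beyond the paper's ``mimic his proof exactly'' and are sound. You take the resultant in $a_1$ and check $R\not\equiv 0$ on the slice via $x^n+a_1x+b$, whose discriminant $\pm\bigl(n^nb^{n-1}\pm(n-1)^{n-1}a_1^n\bigr)$ is squarefree in $a_1$ when $b\neq 0$. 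Like the paper, you treat Case I as a routine transfer. It is not a naive congruence count, since $C$ may be as large as $H^{1+\delta}$, exceeding the box side $H$. Bhargava's uniformity estimate is therefore what really has to be rechecked on the slice.

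The step that does not work as written is Case II. Multiplying $H^{\frac{n+2}{2}+O(\delta)}$ fields by $O(1)$ generators each gives $H^{\frac{n+2}{2}+O(\delta)}$, not $H^{\frac{n+2}{2}+1+O(\delta)}$. Moreover, the claim of $O(1)$ generators per field is unjustified and in fact false.
\begin{itemize}
\item Without the condition $a_0=b$, a fixed field $K$ has $\gg_K H$ elements $\alpha\in\OO_K$ whose minimal polynomial has height $\le H$. That per-field factor is the source of the $+1$ in the paper's exponent. The paper's bound amounts to Bhargava's Case II count with the condition $a_0=b$ simply ignored.
\item If you want to exploit the slice, the correct argument is as follows. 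The condition $a_0=b$ forces $N_{K/\QQ}(\alpha)=\pm b$, so $(\alpha)$ is one of $O_{n,b}(1)$ ideals of norm $\size{b}$, and $\alpha$ is a fixed generator times a unit $\epsilon$. The bounds $\size{\alpha^{(i)}}\le 1+H$ and $\prod_i\size{\alpha^{(i)}}=\size{b}$ place the logarithmic embedding of $\epsilon$ in a ball of radius $O(\log H)$. Nonzero vectors of the log-unit lattice have length bounded below in terms of $n$, so there are $O((\log H)^{n-1})$ such $\epsilon$.
\end{itemize}
The second route gives $H^{\frac{n+2}{2}+O(\delta)}(\log H)^{n-1}$ in Case II. The per-field count is not $O(1)$, but this total is more than enough for $n\ge 8$, and it is stronger than the bound the paper uses. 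With either fix your argument is complete at the same level of detail as the paper's.
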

As Bhargava's techniques are quite flexible, it is likely that other linear families of polynomials can be understood.  We leave this for future study.

\bibliography{ourbib_3}
\bibliographystyle{alpha}
\end{document}